\newtheorem {theorem} {Theorem}
\newtheorem {proposition} [theorem]{Proposition}
\newtheorem {corollary} [theorem]{Corollary}
\newtheorem {lemma}  [theorem]{Lemma}
\newtheorem {example} [theorem]{Example}
\newtheorem {remark} [theorem]{Remark}
\newtheorem {definition} [theorem]{Definition}
\newtheorem{mtheorem}{Theorem}
\tikzset{node distance=3cm, auto}
\begin{document}

\title[Slow-fast systems on the Poincar\'e--Lyapunov sphere]
{Polynomial slow-fast systems on the Poincar\'e--Lyapunov sphere}

\author[O. H. Perez and P. R. da Silva]
{Otavio Henrique Perez$^{1}$ and Paulo Ricardo da Silva$^{2}$}

\address{$^{1}$University of S\~{a}o Paulo (USP), Institute of Mathematics and Computer Science. Avenida Trabalhador S\~{a}o Carlense, 400, CEP 13566-590, S\~{a}o Carlos, S\~{a}o Paulo, Brazil.}

\address{$^{2}$S\~{a}o Paulo State University (UNESP), Institute of Biosciences, Humanities and
	Exact Sciences. Rua C. Colombo, 2265, CEP 15054--000. S. J. Rio Preto, S\~ao Paulo,
	Brazil.}

\email{otavio.perez@icmc.usp.br}
\email{paulo.r.silva@unesp.br}

\thanks{ .}

\subjclass[2020]{34C45, 34D15.}

\keywords {Geometric Singular Perturbation Theory, Invariant Manifolds, Poincar\'e Compactification, Poincar\'e--Lyapunov Compactification, Polynomial Vector Fields.}
\date{}
\dedicatory{}
\maketitle

\begin{abstract}
The main goal of this paper is to study compactifications of polynomial slow-fast systems. More precisely, the aim is to give conditions in order to guarantee normal hyperbolicity at infinity of the Poincaré--Lyapunov sphere for slow-fast systems defined in $\mathbb{R}^{n}$. For the planar case, we prove a global version of the Fenichel Theorem, which assures the persistence of invariant manifolds in the whole Poincaré--Lyapunov disk. We also discuss the appearence of non normally hyperbolic points at infinity, namely: fold, transcritical and pitchfork singularities.
\end{abstract}

\section{Introduction}

Slow-fast systems are well-known in the literature due to their vast importance in applied sciences. For instance, the \emph{van der Pol system} \cite{VdP} was introduced in order to study a vacuum tube triode circuit. Applications in biology can be found in \cite{Hek} and in the references therein, and we refer the book \cite{Kuehn} for applications in other branches of sciences.

A system of ODEs of the form
\begin{equation}\label{eq-def-slowfast-1}
\varepsilon\dot{x} = P(x,\mathbf{y},\varepsilon); \ \quad \ \dot{\mathbf{y}} = \mathbf{Q}(x,\mathbf{y},\varepsilon);
\end{equation}
is called \emph{slow-fast system}, where $x\in\mathbb{R}$, $\mathbf{y} = (y_{2},\dots,y_{n})\in\mathbb{R}^{n-1}$, $0 < \varepsilon \ll 1$. For our purposes, it will supposed that
$$P:\mathbb{R}^{n+1}\rightarrow\mathbb{R}; \ \quad \ \mathbf{Q} = (Q^{2},\dots,Q^{n}):\mathbb{R}^{n+1}\rightarrow\mathbb{R}^{n-1}$$
are polynomial functions with respect to the $x,\mathbf{y}$ variables and analytic with respect to $\varepsilon$. Throughout this paper, $x$ and $\mathbf{y}$ will be called \emph{fast} and \emph{slow variables}, respectively. Setting $\varepsilon = 0$ in equation \eqref{eq-def-slowfast-1}, we obtain the so called \emph{slow system} given by
\begin{equation}\label{eq-def-slow-system}
    0 = P(x,\mathbf{y},0), \ \quad \ \dot{\mathbf{y}} = \mathbf{Q}(x,\mathbf{y},0);
\end{equation}
which is not an ODE, but it is an \textit{algebraic differential equation} (ADE). Solutions of \eqref{eq-def-slow-system} are contained in the \emph{affine algebraic variety}
$$C_{0} = \Big{\{}(x,\mathbf{y})\in\mathbb{R}\times\mathbb{R}^{n-1}; \quad P(x,\mathbf{y},0) = 0\Big{\}};$$
which will be called \emph{critical set} or \emph{critical manifold}. Throughout this paper, $C_{0}$ is a codimension one affine algebraic variety.

In equation \eqref{eq-def-slowfast-1}, the dot $\cdot$ represents the derivative of the functions $x(\tau)$ and $\mathbf{y}(\tau)$ with respect to the variable $\tau$. By taking $\varepsilon t = \tau$, the system \eqref{eq-def-slowfast-1} can be written as
\begin{equation}\label{eq-def-slowfast-2}
x' = P(x,\mathbf{y},\varepsilon); \ \quad \ \mathbf{y}' = \varepsilon \mathbf{Q}(x,\mathbf{y},\varepsilon).
\end{equation}

The apostrophe ' in \eqref{eq-def-slowfast-2} denotes the derivative of $x(t)$ and $\mathbf{y}(t)$ with respect to the variable $t$. Setting $\varepsilon = 0$ in equation \eqref{eq-def-slowfast-2} we obtain
\begin{equation}\label{eq-def-fast-system}
x' = P(x,\mathbf{y},0); \ \quad \ \mathbf{y}' = 0.
\end{equation}
which will be called \emph{fast system}. The system \eqref{eq-def-fast-system} can be seen as a system of ordinary differential equations, with $\mathbf{y}\in\mathbb{R}^{n}$ being a parameter and the critical set $C_{0}$ is a set of equilibrium points of \eqref{eq-def-fast-system}.

Observe that the systems \eqref{eq-def-slowfast-1} and \eqref{eq-def-slowfast-2} are equivalent if $\varepsilon > 0$, since they differ by time scale.  The main challenge is to study systems \eqref{eq-def-slow-system} and \eqref{eq-def-fast-system} in order to obtain information of the full system \eqref{eq-def-slowfast-1}. For this purpose, the key tool that will be used in this paper is \emph{Geometric Singular Perturbation Theory} (GSPT for short). Neil Fenichel's seminal work \cite{Fenichel} assures that, under the hypothesis of \emph{normal hyperbolicity}, compact limit sets persist for small perturbations. See Subsection \ref{subsec-fenichel} for further details.

This paper is dedicated to study conditions in order to assure normal hyperbolicity near infinity. This problem was motivated by \cite{SarmientoOliveiraSilva}, in which all possible global phase portraits of quadratic slow-fast systems defined in the plane were given. In such reference, the authors conjectured a \emph{global version} of Fenichel Theorem for quadratic planar slow-fast systems. The contribution of the present paper is to give an answer of this problem for polynomial slow-fast systems in general.

The Poincar\'e compactification is a well known approach used in the study of global dynamics of polynomial vector fields. The main ideas were introduced in \cite{Poincare} by Henri Poincar\'e for the $2$-dimensional case. We refer to \cite{CimaLlibre, DLA, GV, Perko} for details of such technique, including the case where the polynomial vector field is defined in $\mathbb{R}^{n}$.

In this study, we consider the so called \emph{Poincaré--Lyapunov compactification} (PL-compactification for short) of polynomial vector fields defined in $\mathbb{R}^{n}$. The PL-compactification can be seen as a generalization of the well known Poincaré compactification technique. The construction of the PL-compactification is very similar to the construction of the classical Poincaré compactification, in the sense that we make it quasi-homogeneous instead of homogeneous (see \cite{LimaLlibre}).

Such technique was utilized in several papers by Freddy Dumortier, for example in the study of \emph{Liénard equations} near infinity (see for instance \cite{Dumortier, DumortierHerssens, DumortierLi, DumortierRousseau}). In \cite{LiangHuangZhao, QiuLiang} was given all possible phase portraits in the \emph{Poincaré--Lyapunov disk} (PL-disk for short) of polynomial vector fields having isolated singularities with degree of quasihomogeneity $4$ and $5$, respectively. Structural stability of quasi homogeneous polynomial vector fields in the PL-disk was studied in \cite{OliveiraZhao}. Global dynamics of the \emph{Benoît system} (which is three dimensional) in the \emph{Poincaré--Lyapunov ball} (PL-ball for short) was considered in \cite{LimaLlibre}. We refer to \cite[Chapters 5 and 9]{DLA} for an introduction on such method.

Let $Y$ be a polynomial vector field and let $\omega = (\omega_{1},\dots,\omega_{n})\in\mathbb{Z}^{n}$ be a vector of positive integers, which will be called \emph{weight vector}. The \emph{Poincaré--Lyapunov compactification} $Y^{\infty}$ of $Y$ is an analytic vector field defined in a compact $n$-dimensional manifold called \emph{Poincaré--Lyapunov sphere} (PL-sphere), which is denoted by $\mathbb{S}^{n}_{\omega}$ and it is homeomorphic to $\mathbb{S}^{n} = \{\sum_{i = 1}^{n+1} z_{i}^{2} = 1\}\subset\mathbb{R}^{n+1}$. The phase space $\mathbb{R}^{n}$ is identified with the northern hemisphere of $\mathbb{S}^{n}_{\omega}$, and the set $\{z_{n+1} = 0\}\subset\mathbb{S}^{n}_{\omega}$ plays the role of the infinity. See subsection \ref{subsec-pl-compact} for details.

In the study of global dynamics of polynomial vector fields, the PL-compactification $Y^{\infty}$ is a vector field defined in $\mathbb{S}^{n}_{\omega}$, however, the global phase portrait is often sketched in the PL-ball. Throughout this paper, the $n$-dimensional PL-ball will be denoted by $\mathbb{B}^{n}_{\omega}$. In particular, the PL-disk will be denoted by $\mathbb{D}_{\omega} = \mathbb{B}^{2}_{\omega}$. The interior of $\mathbb{D}_{\omega}$ plays the role of the $\mathbb{R}^{2}$, while its boundary plays the role of the infinity. Analogously, the interior of the 3-dimensional PL-ball $\mathbb{B}^{3}_{\omega}$ plays the role of $\mathbb{R}^{3}$ and its boundary represents the infinity. See Figure \ref{fig-pl-disk-ball}.

\begin{figure}[h!]
  \begin{overpic}[scale=1.3]{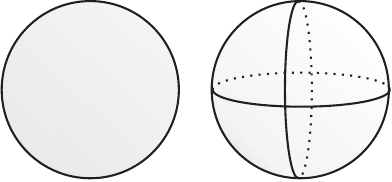}
		\put(21,-5){{$\mathbb{D}_{\omega}$}}
		\put(74,-5){{$\mathbb{B}^{3}_{\omega}$}}
		\end{overpic}
  \caption{\footnotesize{PL-disk (left) and 3-dimensional PL-ball (right).}}
  \label{fig-pl-disk-ball}
\end{figure}

Many interesting phenomena can occur at infinity of the phase space. For instance, consider the slow-fast system
\begin{equation}\label{eq-vdp-r3}
x' = y^{2}z - \frac{x^{2}y}{2} -   \frac{x^{3}}{3}; \quad y' = 0; \quad z' = \varepsilon (ay^{3} - xy^{2}). 
\end{equation}

After Poincaré compactification, in one of the three charts the following system is obtained
\begin{equation}\label{eq-vdp-infinity}
u' = v - \frac{u^{2}}{2} - \frac{u^{3}}{3}; \quad v' = \varepsilon (a-x);   
\end{equation}
which is the van der Pol system studied in \cite{DumRou}. See also Figure \ref{fig-vanderpol}. More generally, the system
\begin{equation}\label{eq-vdp-pl-r3}
x' = y^{k_{1}}z - \frac{x^{2}y^{k_{2}}}{2} -   \frac{x^{3}y^{k_{3}}}{3}; \quad y' = 0; \quad z' = \varepsilon (ay^{k_{4}} - xy^{k_{5}})
\end{equation}
presents a van der Pol system at infinity after a PL-compactification with weights $\omega = (\omega_{1},\omega_{2},\omega_{3})$ if, and only if, the positive integers $k_{1},\dots,k_{5}$ satisfy\noindent\begin{multicols}{3}
$$k_{1} = \frac{\delta + \omega_{1} - \omega_{3}}{\omega_{2}};$$
$$k_{2} = \frac{\delta - \omega_{1}}{\omega_{2}};$$
$$k_{3} = \frac{\delta - 2\omega_{1}}{\omega_{2}};$$
$$k_{4} = \frac{\delta + \omega_{3}}{\omega_{2}};$$
$$k_{5} = \frac{\delta - \omega_{1} + \omega_{3}}{\omega_{2}}.$$
\end{multicols}

\begin{figure}[h!]
  \center{\includegraphics[width=0.40\textwidth]{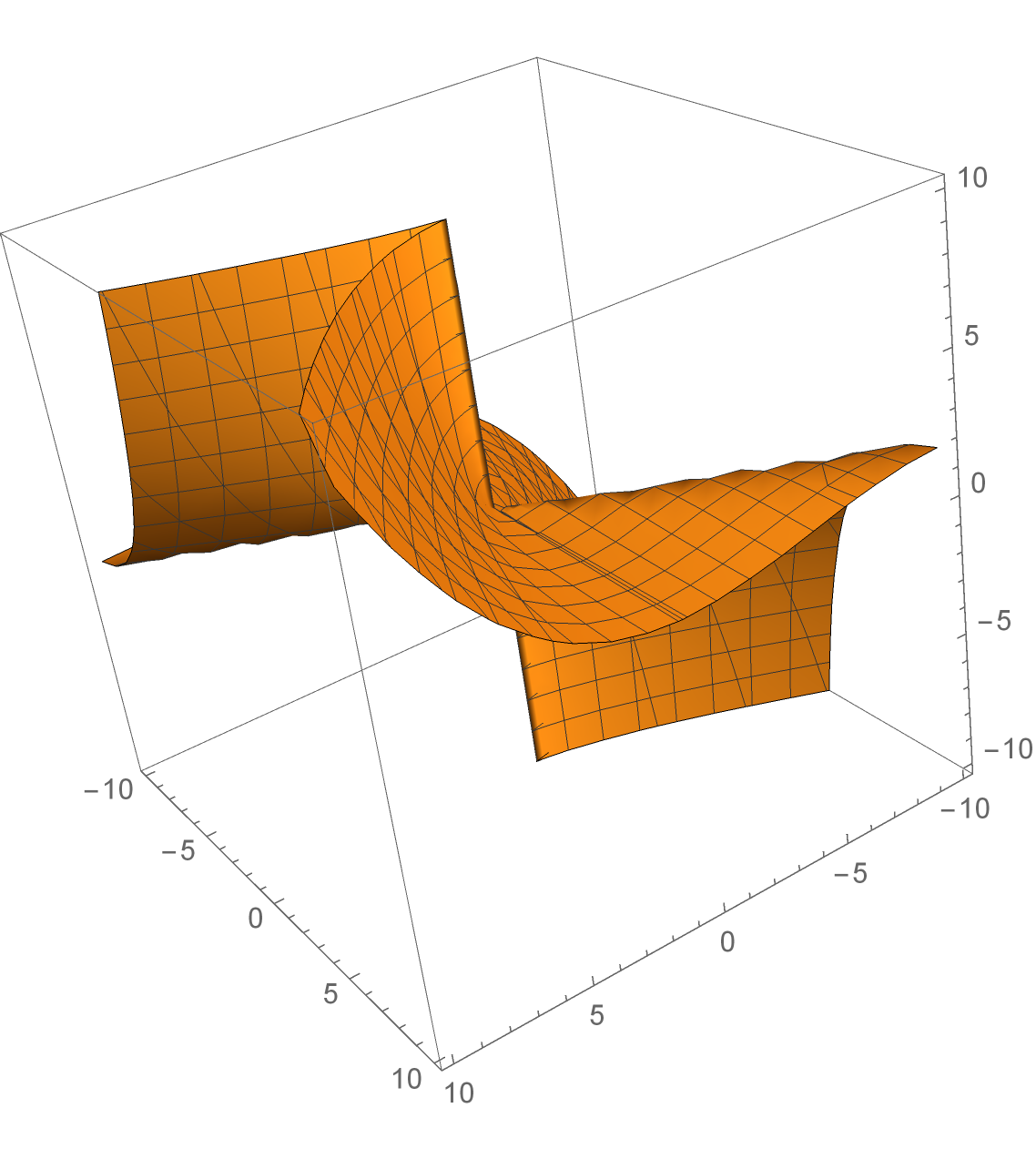}\hspace{0.55cm}\includegraphics[width=0.35\textwidth]{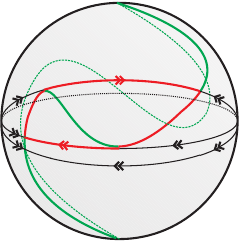}}\\
  \caption{\footnotesize{Critical manifold of slow-fast system \eqref{eq-vdp-r3} (left) and its phase portrait at infinity (right), which is given by the van der Pol equation \eqref{eq-vdp-infinity}. The critical manifold is highlithed in green and the canard cycle is highlighted in red (see also \cite{DumRou}).}}
  \label{fig-vanderpol}
\end{figure}

Let us briefly describe our main results. A preliminary and useful result is given in Proposition \ref{prop-general-dim}, which discusses the possible dynamics at infinity of a compactified slow-fast system based on the degree of quasi homogeneity $P$ and $\mathbf{Q}$. Afterwards, in Theorem \ref{teo-fenichel-infty} we state Fenichel Theorem in a suitable way in order to study the perturbed system at infinity (boundary of the PL-ball).

In Theorem \ref{teo-a} we state conditions that polynomial slow-fast systems in $\mathbb{R}^{n}$ must satisfy in order to assure normal hyperbolicity at infinity (boundary of the PL-ball). More precisely, item (a) of Theorem \ref{teo-a} states an algebraic condition on the polynomial $P$ which the initial slow-fast system must satisfy so that the origin of each chart of the PL-ball is normally hyperbolic. Such condition implies that, on the \emph{Newton polytope} of the slow-fast vector field (see Subsection \ref{subsec-newton-polytope} for a precise definition), the points associated to higher order monomials are all contained in the same $(n-1)$-dimensional compact face of the polytope. We emphasize that Theorem \ref{teo-a} item (a) concerns the origin of each chart of the compactification.

On the other hand, Theorem \ref{teo-a} item (c) we give a \emph{necessary} condition in order to assure normal hyperbolicity outside the origin, and such condition is based on the transversal intersection of the critical manifold with the infinity. Finally, Theorem \ref{teo-a} item (b) concerns a degenerate case, in which the \emph{whole} infinity is a component of the critical manifold.

In dimension 2, Theorem \ref{teo-fenichel-global-r2} gives sufficient and necessary conditions in order to assure the persistence of invariant manifolds in the \emph{whole} PL-disk. Actually, transversality turns out to be a necessary and sufficient condition in order to assure normal hyperbolicity at infinity. Finally, Theorem \ref{teo-normal-forms} determine conditions that slow-fast systems defined in $\mathbb{R}^{3}$ must satisfy in order to generate typical singularities of planar slow-fast systems at infinity, namely fold, transcritical and pitchfork singularities.

This paper is structured as follows. In Section \ref{sec-preliminaires} is presented some preliminaries on GSPT and Poincaré--Lyapunov compactification. Section \ref{sec-intermediate} is devoted to discuss some preliminary propositions and examples that will be used in the subsequent sections. Theorem \ref{teo-a} is proven in Section \ref{sec-NH-RN}, and Section \ref{sec-planar-gspt} is devoted to give the proof of the global version of Fenichel Theorem in the plane (Theorem \ref{teo-fenichel-global-r2}). Finally, in Section \ref{sec-non-NH} is proven Theorem \ref{teo-normal-forms} and some examples are also given.

\section{Preliminaries on geometric singular perturbation theory and Poincar\'e--Lyapunov compactification}\label{sec-preliminaires}

\subsection{Geometric singular perturbation theory}\label{subsec-fenichel}

A point $p\in C_{0}$ is \emph{normally hyperbolic} if $P_{x}(p) \neq 0$. The set of all normally hyperbolic points of $C_{0}$ will be denoted by $\mathcal{NH}(C_{0})$. A point $p\in\mathcal{NH}(C_{0})$ is called \emph{attracting point} if $P_{x}(p) < 0$; and it is called \emph{repelling point} if $P_{x}(p) > 0$. 

The \emph{Fenichel Theorem} is a major result in Geometric Singular Perturbation Theory. It assures that, given a $j$-dimensional compact normally hyperbolic sub-manifold $\mathcal{K}\subset\mathcal{NH}(C_{0})$ (possibly with boundary) of the slow system \eqref{eq-def-slow-system}, there exists a family of smooth manifolds $\mathcal{K}_{\varepsilon}$ such that $\mathcal{K}_{\varepsilon} \rightarrow \mathcal{K}_{0} = \mathcal{K}$ according to Hausdorff distance and $\mathcal{K}_{\varepsilon}$ is a normally hyperbolic locally invariant manifold of \eqref{eq-def-slowfast-1}. Such result was first proved in \cite{Fenichel}. See also \cite[Theorem 2.2]{Szmolyan} for a precise statement. In Theorem \ref{teo-fenichel-infty}, we stated the Fenichel Theorem in a suitable way in order to assure the persistence of invariant manifolds at infinity.

The Fenichel Theorem can be seen as a ``generalization'' of the theorem of the stable and unstable manifolds. The \textit{local invariance} of $\mathcal{K}_{\varepsilon}$ means that it may exist boundaries through which trajectories can leave. Just as in center manifold theory, in general the locally invariant manifold $\mathcal{K}_{\varepsilon}$ obtained in the Fenichel Theorem is not unique. Indeed, it may exist infinitely many invariant manifolds $\mathcal{O}(e^{-\frac{K}{\varepsilon}})$-close to the critical manifold. See Figure \ref{fig-fenichel}. The manifold $\mathcal{K}_{\varepsilon}$ obtained in the Fenichel Theorem is called \emph{slow manifold}.

\begin{figure}[h!]
  \begin{overpic}[scale=1]{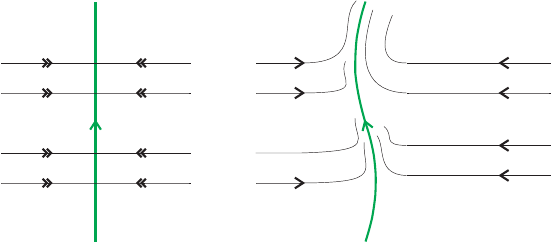}
		\put(10,0){{ $\mathcal{K}$}}
		\put(70,0){{ $\mathcal{K}_{\varepsilon}$}}
		\end{overpic}
  \caption{\footnotesize{Planar slow-fast system for $\varepsilon = 0$ (left) and for $\varepsilon > 0$ sufficiently small (right). The Fenichel Theorem assures the existence of a family of invariant manifolds $\mathcal{K}_{\varepsilon}$, and the flow on $\mathcal{K}_{\varepsilon}$ converges to the flow on $\mathcal{K}$. Moreover, Fenichel Theorem also assure the existence of a family of stable manifolds $\mathcal{W}^{s}_{\varepsilon}$ of $\mathcal{K}_{\varepsilon}$.}}
  \label{fig-fenichel}
\end{figure}

Concerning the slow system \eqref{eq-def-slow-system}, \emph{``any structure in $\mathcal{NH}(C_{0})$ which persists under regular perturbations persists under singular perturbation''} \cite[pp.~91]{Fenichel}. In other words, hyperbolic equilibrium points or limit cycles of \eqref{eq-def-slow-system} in $\mathcal{NH}(C_{0})$ persist for $\varepsilon$ sufficiently small.

The Fenichel Theorem gives an answer about the dynamics of system \eqref{eq-def-slowfast-1} near normally hyperbolic manifolds for $\varepsilon$ sufficiently small. We refer to \cite{DumRou, DumRou2, KrupaSzmolyan, KrupaSzmolyan2} for further problems and techniques concerning the dynamics of \eqref{eq-def-slowfast-1} near non-normally hyperbolic manifolds.

\subsection{Poincaré--Lyapunov compactification of polynomial vector fields}\label{subsec-pl-compact}

Just as in the homogeneous compactification, the vector field $Y^{\infty}$ is studied using directional charts $U_{i}$ and $V_{i}$, in which
$$
U_{i} = \{\mathbf{z}\in\mathbb{S}^{n}_{\omega}; \ z_{i} > 0\}; \  V_{i} = \{\mathbf{z}\in\mathbb{S}^{n}_{\omega}; \ z_{i} < 0\}; \  \mathbf{z} = (z_{1},\dots,z_{n+1})\in\mathbb{R}^{n+1};
$$
for each $i = 1,\dots, n+1$. See Figure \ref{fig-PL-compactification}.

Consider the polynomial vector field $Y(\mathbf{x}) = Y(x_{1},\dots,x_{n})$. For every $i = 1,\dots, n$, the expression of the compactified vector field $Y^{\infty}(\mathbf{u}) = Y^{\infty}(u_{1},\dots,u_{n})$ in the charts $U_{i}$ is obtained from the change of coordinates
$$x_{1} = \displaystyle\frac{u_{1}}{u_{n}^{\omega_{1}}}, \ \ldots, \ x_{i-1} = \displaystyle\frac{u_{i-1}}{u_{n}^{\omega_{i-1}}}, \ x_{i} = \displaystyle\frac{1}{u_{n}^{\omega_{i}}}, \ x_{i+1} = \displaystyle\frac{u_{i}}{u_{n}^{\omega_{i+1}}}, \ \ldots, \ x_{n} = \displaystyle\frac{u_{n-1}}{u_{n}^{\omega_{n}}};$$
and for different charts $U_{i}$ the coordinate system $(u_{1},\ldots,u_{n})$ has different meanings. However, for every $i = 1,\dots, n$ the set $\{u_{n} = 0\}$ is an invariant set of $Y^{\infty}$ which plays the role of the infinity. 

On the other hand, the expression of $Y^{\infty}(\mathbf{u}) = Y^{\infty}(u_{1},\dots,u_{n})$ in the charts $V_{i}$ is obtained in the same way as in the charts $U_{i}$, but setting $x_{i} = -\displaystyle\frac{1}{u_{n}^{\omega_{i}}}$ instead of $x_{i} = \displaystyle\frac{1}{u_{n}^{\omega_{i}}}$.

For $i = n+1$, the expression of $Y^{\infty}$ in $U_{n+1}$ coincides with the expression of the vector field $Y$ In the chart $V_{n+1}$, the expression of $Y^{\infty}$ coincides with the expression $Y$ (up to a multiplication by -1).

\section{Poincaré--Lyapunov compactification of slow-fast systems}\label{sec-intermediate}

Consider the polynomial slow-fast system \eqref{eq-def-slowfast-2}. Recall that $P$ and $\mathbf{Q}$ are polynomial with respect to the fast variable $x$ and the slow variables $\mathbf{y}$, but it is analytic with respect to $\varepsilon$.

In what follows we present the definitions of quasi homogeneous polynomial and quasi homogeneous vector field, which can also be found in \cite[Section 7.3]{Kuehn}. Let $\omega = (\omega_{1},\dots,\omega_{n})\in\mathbb{Z}^{n}$ be a weight vector. A polynomial $F:\mathbb{R}^{n}\rightarrow\mathbb{R}$ is \emph{quasi homogeneous of type $\omega$ and degree $k\in\mathbb{N}$} if
\begin{equation*}
F(\lambda^{\omega_{1}}x,\lambda^{\omega_{2}}y_{2},\dots,\lambda^{\omega_{n}}y_{n}) = \lambda^{k}\cdot F(x,y_{2},\dots,y_{n}); \ \quad \ \forall\lambda\in\mathbb{R}.
\end{equation*}

We say that a polynomial vector field $Y = (Y_{1},\dots,Y_{n})$ defined in $\mathbb{R}^{n}$ is \emph{quasi homogeneous of type $\omega$ and degree $k_{\omega}\in\mathbb{N}$} if each component $Y_{j}:\mathbb{R}^{n}\rightarrow\mathbb{R}$ of $Y$ is quasi homogeneous of type $\omega$ and degree $k+\omega_{j}$. In other words, it satisfies
\begin{equation*}
Y_{j}(\lambda^{\omega_{1}}x,\lambda^{\omega_{2}}y_{2},\dots,\lambda^{\omega_{n}}y_{n}) = \lambda^{k + \omega_{j}}\cdot Y_{j}(x,y_{2},\dots,y_{n}); \ \quad \ \forall\lambda\in\mathbb{R}.
\end{equation*}

\begin{example}\label{exe-planar-cusp}
Consider the planar polynomial vector field
$$Y(x,y) = \big{(}Y_{1}(x,y),Y_{2}(x,y)\big{)} = (y,x^{2})$$
which determines a cusp singularity at the origin. This vector field is quasi homogeneous of type $\omega = (2,3)$ and degree $1$, because
$$Y_{1}(\lambda^{2}x,\lambda^{3}y) = \lambda^{1 + 2}Y_{1}(x,y), \ \quad \ Y_{2}(\lambda^{2}x,\lambda^{3}y) = \lambda^{1 + 3}Y_{2}(x,y).$$
\end{example}

The vector field associated to the slow-fast system \eqref{eq-def-slowfast-2} will be denoted by $X_{\varepsilon}$, whereas its PL-compactification will be denoted by $X^{\infty}_{ \varepsilon}$, which is a vector field defined in $\mathbb{S}^{n}_{\omega}\subset\mathbb{R}^{n+1}$. We will also write the polynomial functions $P, Q^{j}$ as
$$P = \sum_{d=-1}^{\delta_{1}}P_{ d}, \ \quad \ Q^{j} = \sum_{d=-1}^{\delta_{j}}Q^{j}_{ d};$$
in which $P_{ d}$ is the quasi homogeneous component of type $\omega$ and degree $d+\omega_{1}$, and $Q^{j}_{ d}$ is the quasi homogeneous component of type $\omega$ and degree $d+\omega_{j}$. The \emph{degree of quasihomogeneity of type $\omega$ of $P_{ d}$ and $Q^{j}_{ d}$} will be denoted by
$\deg_{\omega}P_{ d}$ and $\deg_{\omega}Q^{j}_{ d}$. The \emph{highest degree of quasihomogeneity of type $\omega$ of $P$ and $Q^{j}$} is
$$
  \deg_{\omega}P = \displaystyle\max_{d}\{\deg_{\omega}P_{ d}\} = \delta_{1} + \omega_{1}, \ 
  \deg_{\omega}Q^{j} = \displaystyle\max_{d}\{\deg_{\omega}Q^{j}_{ d}\} = \delta_{j} + \omega_{j}.
$$

Then, the highest quasihomogeneous degree component of $P$ and $Q^{j}$ is, respectively, $P_{ \delta_{1}}$ and $Q^{j}_{ \delta_{j}}$. The degree of quasi homogeneity type $\omega$ of the vector field $X_{\varepsilon}$ will be simply denoted by $\deg_{\omega}X_{\varepsilon} = \max{\delta_{l}} = \delta$.

\begin{example}\label{exe-planar-cusp-2}
Let $Y$ be the planar polynomial vector field given in the Example \ref{exe-planar-cusp}. If $\omega = (2,3)$, then $\deg_{\omega}Y = 1$, $\deg_{\omega}Y_{1} = 1+2 = 3$ and $\deg_{\omega}Y_{2} = 1+3 = 4$. On the other hand, if we consider $\omega = (1,1)$, then $\deg_{\omega}Y = 1$, $\deg_{\omega}Y_{1} = 0+1 = 1$, and $\deg_{\omega}Y_{2} = 1+1 = 2$.
\end{example}

In what follows, the expressions of $X^{\infty}_{ \varepsilon}$ in each of the $2(n+1)$ local charts of $\mathbb{S}^{n}_{\omega}$ are given. The notation $(x,\mathbf{y},\varepsilon)$ concerns a coordinate system in the finite part of the phase space, whereas $(u,\mathbf{v},\varepsilon)$ concerns the coordinate system near infinity, in which $\mathbf{v} = (v_{2},\dots,v_{n})$. We emphasize that in different open sets $U_{i}$ of the covering, the coordinates $(u,\mathbf{v},\varepsilon)$ have different meanings. See Figure \ref{fig-PL-compactification}.

\begin{figure}[h!]
  \begin{overpic}[scale=1]{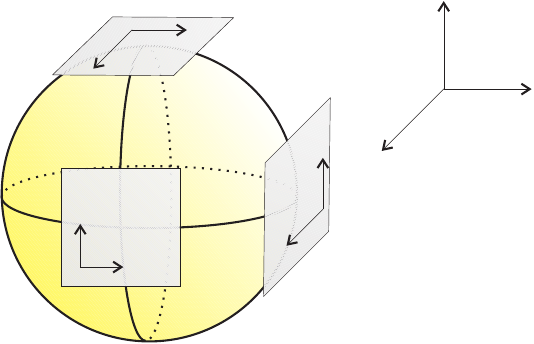}
		\put(3,5){$U_{1}$}
        \put(15,25){$u$}
        \put(25,15){$v_{2}$}
        \put(63,30){$U_{2}$}
        \put(51,16){$u$}
        \put(55,35){$v_{2}$}
        \put(10,60){$U_{3}$}
        \put(21,51){$u$}
        \put(30,55){$v_{2}$}
        \put(75,35){$x$}
        \put(100,50){$y_{2}$}
        \put(77,62){$y_{3}$}
		\end{overpic}
  \caption{\footnotesize{Directional charts that cover the PL-ball $\mathbb{B}^{3}_{\omega}$. Following the terminology of Definition \ref{def-directions}, the slow-fast vector field obtained in the chart $U_{1}$ is the compactification in the fast direction, whereas the vector field obtained in the chart $U_{l}$ is the compactification in the slow direction, for $l = 2,\dots,n$.}}
  \label{fig-PL-compactification}
\end{figure}

In $U_{1}$, the compactification is written as
\begin{equation}\label{eq-slow-fast-U1-n}
\left\{
  \begin{array}{rcl}
   u' & = & \displaystyle\sum_{d=-1}^{\delta}v_{n}^{\delta - d}\big{(}\varepsilon Q^{2}_{ d} - u\displaystyle\frac{\omega_{2}}{\omega_{1}}P_{ d}\big{)}, \\
   v'_{2} & = & \displaystyle\sum_{d=-1}^{\delta}v_{n}^{\delta - d}\big{(}\varepsilon Q^{3}_{ d} - v_{2}\displaystyle\frac{\omega_{3}}{\omega_{1}}P_{ d}\big{)}, \\
   \vdots & = & \vdots \\
    v'_{n-1} & = & \displaystyle\sum_{d=-1}^{\delta}v_{n}^{\delta - d}\big{(}\varepsilon Q^{n}_{ d} - v_{n-1}\displaystyle\frac{\omega_{n}}{\omega_{1}}P_{ d}\big{)}, \\
    v'_{n} & = & -\displaystyle\frac{1}{\omega_{1}}\displaystyle\sum_{d=-1}^{\delta}v_{n}^{\delta+1-d}P_{ d}; 
  \end{array}
\right.
\end{equation}
where $P_{ d}, Q^{j}_{ d}$ are computed in $(1,u,v_{2},\dots,v_{n-1},\varepsilon)$ for all $j = 2,\dots,n$. Observe that it was used the quasi homogeneity of the components $P_{ d}, Q^{j}_{ d}$ in order to obtain the System \eqref{eq-slow-fast-U1-n}.

For $l=2,\dots,n$, in $U_{l}$ the compactification is written as
\begin{equation}\label{eq-slow-fast-Ul-n}
\left\{
  \begin{array}{rclc}
        u' & = & \displaystyle\sum_{d=-1}^{\delta}v_{n}^{\delta - d}\big{(}P_{ d} - \varepsilon u\displaystyle\frac{\omega_{1}}{\omega_{l}}Q^{l}_{ d}\big{)}, & \\
        v'_{i} & = &  \varepsilon\displaystyle\sum_{d=-1}^{\delta}v_{n}^{\delta - d}\big{(}Q^{i}_{ d} - v_{i}\displaystyle\frac{\omega_{i}}{\omega_{l}}Q^{l}_{ d}\big{)}, & 1<i<l \\
        v'_{i-1} & = &  \varepsilon\displaystyle\sum_{d=-1}^{\delta}v_{n}^{\delta - d}\big{(}Q^{i}_{ d} - v_{i-1}\displaystyle\frac{\omega_{i}}{\omega_{l}}Q^{l}_{ d}\big{)}, & l<i\leq n \\
        v'_{n} & = & -\displaystyle\frac{\varepsilon}{\omega_{l}}\sum_{d=-1}^{\delta}v_{n}^{\delta+1-d}Q^{l}_{ d}, &
  \end{array}
\right.
\end{equation}
where $j = 2,\dots,n$, and the polynomial functions $P_{ d}, Q^{j}_{ d}$ are computed in  $(u, v_{2}, \dots, v_{l-1}, 1, v_{l}, \dots, v_{n-1},\varepsilon)$. Once again it was used the quasi homogeneity of $P_{ d}, Q^{j}_{ d}$ in order to obtain the System \eqref{eq-slow-fast-Ul-n}.

The expression of $X^{\infty}_{ \varepsilon}$ in $U_{n+1}$ is precisely the expression of the original vector field \eqref{eq-def-slowfast-2}. The expression of $X^{\infty}_{ \varepsilon}$ in the open set $V_{i}$ is obtained by replacing $\frac{1}{v_{n}^{\omega_{i}}}$ by $-\frac{1}{v_{n}^{\omega_{i}}}$ in the change of coordinates of the chart $U_{i}$, for all $i = 1,\dots,n+1$. Furthermore, in any local chart $U_{i}$ and $V_{i}$ the set $\{v_{n} = 0\}$ is an invariant set of $X^{\infty}_{ \varepsilon}$ that plays the role of the infinity.

\begin{definition}\label{def-directions}
The vector field obtained in the chart $U_{1}$ will be called \emph{compactification of $X_{\varepsilon}$ in the positive fast direction}. The vector field obtained in $U_{l}$, for $l=2,\dots,n$, will be called \emph{compactification of $X_{\varepsilon}$ in the $l$-th positive slow direction}.
\end{definition}

From the expressions of $X^{\infty}_{ \varepsilon}$ in the charts $U_{l}$ for $l = 1,\dots, n$, one can conclude the following proposition:

\begin{proposition}\label{prop-general-dim}
Let $X_{\varepsilon}$ be the polynomial vector field associated to the slow-fast system \eqref{eq-def-slowfast-2} and denote its PL-compactification by $X^{\infty}_{ \varepsilon}$. Then, in the charts $U_{l}$ for $l = 1,\dots, n$, it follows that:
\begin{description}
    \item[(a)] The PL-compactification of $X_{\varepsilon}$ in the fast direction is not a slow-fast system, but it is a singular perturbation problem. In addition, for $\varepsilon = 0$, the set of equilibria is given by $\{(0,0)\}\cup\{P_{ \delta_{1}}(1,u,v_{2},\dots,v_{n-1},0) = 0\}$.
    \item[(b)] The PL-compactification of $X_{\varepsilon}$ in the $l$-th slow direction is a slow-fast system, for all $l = 2,\dots, n$.
    \item[(c)] Suppose $X_{\varepsilon}$ is a $n$-dimensional vector field for $n \geq 3$. Then, for $l = 2,\dots, n$ , the vector field $X^{\infty}_{ \varepsilon}$ defines a slow-fast system at infinity $\{v_{n} = 0\}$ in the chart $U_{l}$ if, and only if, $\delta = \delta_{1} = \delta_{j_{0}}$, for some $j_{0}$.
    Moreover, such a slow-fast system has one fast variable and $n-2$ slow variables.
    \item[(d)] If $\delta = \delta_{1} > \delta_{j}$ for all $j$, then $\varepsilon\mathbf{Q}$ does not affect the dynamics at infinity $\{v_{n} = 0\}$. On the other hand, if $\delta_{1} < \delta_{j_{0}} = \delta$ for some $j_{0}$, in the limit $\varepsilon = 0$ the infinity is filled with equilibria. 
\end{description}
\end{proposition}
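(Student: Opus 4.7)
The plan is to read all four items off the explicit coordinate expressions \eqref{eq-slow-fast-U1-n} and \eqref{eq-slow-fast-Ul-n} for $X^{\infty}_{\varepsilon}$ in each chart, supplemented (for items (c) and (d)) by a restriction to the invariant infinity $\{v_n = 0\}$. No external machinery is needed beyond the definitions already in place; the content is essentially careful bookkeeping on these two systems.

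For item (a), every one of the $n$ component equations of \eqref{eq-slow-fast-U1-n} contains both an $\varepsilon Q^{j}_{d}$-summand and a $P_{d}$-summand, so no variable has velocity purely $O(1)$ or purely $O(\varepsilon)$ and the system is not of slow-fast form. On the other hand, at $\varepsilon = 0$ every right-hand side of \eqref{eq-slow-fast-U1-n} factors one of $u,v_{2},\ldots,v_{n}$, so the Jacobian at the origin of the chart is zero and the perturbation is singular. The equilibrium computation at $\varepsilon = 0$ reduces to: either $\sum_{d} v_{n}^{\delta - d} P_{d} = 0$ (which makes all $n$ lines vanish simultaneously) or $u = v_{2} = \cdots = v_{n} = 0$; restricting to $\{v_n = 0\}$ the sum collapses to $P_{\delta_{1}}(1,u,v_{2},\ldots,v_{n-1},0)$, giving the stated set. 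For item (b), the $u$-equation of \eqref{eq-slow-fast-Ul-n} contains the $O(1)$ term $\sum_{d} v_{n}^{\delta - d} P_{d}$, while every $v_{i}$- and the $v_{n}$-equation carries an explicit overall factor of $\varepsilon$; this is the slow-fast form with $u$ fast and $(v_{2},\ldots,v_{n})$ slow.

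For items (c) and (d) I would restrict \eqref{eq-slow-fast-Ul-n} to $\{v_n = 0\}$, keeping only the $d = \delta$ summands, to obtain
\begin{equation*}
u' = P_{\delta} - \varepsilon u \tfrac{\omega_{1}}{\omega_{l}} Q^{l}_{\delta}, \qquad v_{i}' = \varepsilon\bigl(Q^{i}_{\delta} - v_{i} \tfrac{\omega_{i}}{\omega_{l}} Q^{l}_{\delta}\bigr),
\end{equation*}
together with $v_{n}' \equiv 0$. For (c), a genuine slow-fast structure needs simultaneously $P_{\delta} \not\equiv 0$ (otherwise the $u$-equation is also $O(\varepsilon)$) and $Q^{j_{0}}_{\delta} \not\equiv 0$ for some $j_{0}$ (otherwise every slow equation degenerates to $0$), which forces $\delta = \delta_{1} = \delta_{j_{0}}$; the dimension count $u$ fast, $(v_{2},\ldots,v_{n-1})$ slow, $v_{n}$ already eliminated, yields $n-2$ slow variables. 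Item (d) then follows from the same restriction: if $\delta = \delta_{1} > \delta_{j}$ for all $j \geq 2$ then $Q^{j}_{\delta} \equiv 0$ and the $\varepsilon \mathbf{Q}$-contribution drops out at infinity, while if $\delta_{1} < \delta$ then $P_{\delta} \equiv 0$, so at $\varepsilon = 0$ the whole right-hand side vanishes on $\{v_{n} = 0\}$ and infinity is pointwise fixed. The only step that requires some care is the biconditional in (c), where the two distinct obstructions to a slow-fast structure at infinity (vanishing of $P_{\delta}$ versus vanishing of every $Q^{j}_{\delta}$) must be separated cleanly and each tied to its own condition on the $\delta_{j}$.
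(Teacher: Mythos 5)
Your overall route is the same as the paper's: the paper disposes of (a) and (b) by saying they ``follow directly'' from \eqref{eq-slow-fast-U1-n} and \eqref{eq-slow-fast-Ul-n}, and proves (c) and (d) exactly as you do, by setting $v_n=0$ so that only the $d=\delta$ summands survive and then separating the two obstructions ($P_\delta\equiv 0$ versus all $Q^j_\delta\equiv 0$). Your treatment of (b), (c), (d) and of the equilibrium set in (a) is fine and, if anything, more explicit than the paper's.

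Two slips in your item (a), though. First, the $v_n'$-equation of \eqref{eq-slow-fast-U1-n} contains no $\varepsilon Q$-summand at all, so ``every one of the $n$ component equations contains both'' is not literally true; the non-slow-fast character comes from the first $n-1$ equations mixing $O(1)$ and $O(\varepsilon)$ terms. Second, and more substantively, your justification that ``the Jacobian at the origin of the chart is zero'' is false in general. At $\varepsilon=0$ each right-hand side is (coordinate)$\times g$ with $g=\sum_d v_n^{\delta-d}P_d(1,u,v_2,\dots,v_{n-1},0)$, so the linearization at the origin is $-\frac{g(0)}{\omega_1}\,\mathrm{diag}(\omega_2,\dots,\omega_n,1)$ with $g(0)=P_\delta(1,0,\dots,0)$; when this is nonzero the origin is a hyperbolic node, which is precisely what Theorem~\ref{teo-a}(a) of the paper asserts. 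The correct reason the problem is a \emph{singular} perturbation is not degeneracy at the origin but the presence, at $\varepsilon=0$, of the continuum of equilibria $\{g=0\}$ (the trace of the critical set in this chart), which does not persist as a set of equilibria for $\varepsilon>0$. With that justification replaced, the argument matches the paper's.
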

\begin{proof}
    Assertions (a) and (b) follow directly from the expressions \eqref{eq-slow-fast-U1-n} and \eqref{eq-slow-fast-Ul-n} of the vector fields in the fast and slow-directions, respectively. In order to prove assertions (c) and (d), assume that  $X_{\varepsilon}$ is a $n$-dimensional vector field for $n \geq 3$. Observe that, from Equation \eqref{eq-slow-fast-Ul-n}, if  $\delta_{1} > \delta_{j}$ for all $j$, then only terms of $P$ play role at infinity $\{v_{n} = 0\}$. In other words, if one sets $v_{n} = 0$ in Equation \eqref{eq-slow-fast-Ul-n}, then only terms of $P$ will remain, thus $\varepsilon\mathbf{Q}$ does not affect the dynamics at infinity. The same reasoning can be used to prove that, if  $\delta_{1} < \delta_{j_{0}}$ for some $j_{0}$, then only terms of $\varepsilon\mathbf{Q}$ play role at $\{v_{n} = 0\}$. Then, setting $\varepsilon = 0$, the infinity $\{v_{n} = 0\}$ is filled with equilibria. Finally, if $\delta = \delta_{1} = \delta_{j_{0}}$ for some $j_{0}$, then terms of both $P$ and $Q^{j_{0}}$ play role at infinity, and therefore dynamics at infinity of $U_{l}$ is given by a slow-fast system.   
\end{proof}

\begin{example}\label{exe-prop-planar}
Consider the planar slow-fast system
\begin{equation}\label{eq-exe-prop-planar}
 x' = P(x,y,\varepsilon) = -x; \ \quad \ y' = \varepsilon Q(x,y,\varepsilon) = \varepsilon(y^{2} - x^{3}).   
\end{equation}

After a PL-compactification with weights $\omega = (2,3)$, the systems obtained in the fast and slow directions $U_{1}$ and $U_{2}$ are, respectively, \begin{multicols}{2} \begin{equation}\label{eq-exe-infty-equilibria-1}\left\{\begin{array}{rcl}
   u' & = & \varepsilon (u^{2} - 1) + \frac{3uv^{3}}{2}; \\
   v' & = & \frac{v^{4}}{2};
  \end{array}
\right. \end{equation}

\begin{equation}\label{eq-exe-infty-equilibria}
\left\{
  \begin{array}{rcl}
   u' & = & \frac{2\varepsilon u(u^{3} - 1)}{3} - uv^{3}; \\
   v' & = & \frac{\varepsilon v(u^{3} - 1)}{3}.
  \end{array}
\right.
 \end{equation}
 \end{multicols}

In this example, we have $\delta_{1} < \delta_{2}$ because $\deg_{\omega}P = \delta_{1} + 2 = 2$ and $\deg_{\omega} Q = \delta_{2} + 3 = 6$. As expected from item (a) of Proposition \ref{prop-general-dim}, the system \eqref{eq-exe-infty-equilibria-1} defined in $U_{1}$ is not a slow-fast system. From item (d), the infinity is filled with equilibria when $\varepsilon = 0$ in equation \eqref{eq-exe-infty-equilibria}. Observe that in $U_{2}$ the critical manifold is given by $\{uv^{3} = 0\}$. See Figure \ref{fig-exe-prop-planar}.
\end{example}

\begin{example}
Consider the slow-fast system
\begin{equation}\label{exe-prop-r3-1}
 x' = P(x,y,z,\varepsilon) = x(y^{2} - z^{2}); \quad y' = \varepsilon; \quad z' = \varepsilon.
\end{equation}

After a PL-compactification with weights $\omega = (1,1,1)$ (which is the classical Poincaré compactification), the systems obtained in $U_{1}$, $U_{2}$ and $U_{3}$ are, respectively,
\begin{equation}
 u' = u(v^{2} - u^{2}) + \varepsilon w^{3}; \quad v' = v(v^{2} - u^{2}) + \varepsilon w^{3}; \quad w' = w(v^{2} - u^{2});   
\end{equation}

\begin{equation}
 u' = u(1 - v^{2} - \varepsilon w^{3}); \quad v' = \varepsilon w^{3}(1-v); \quad w' = -\varepsilon w^{4};   
\end{equation}

\begin{equation}
 u' = u(1 - v^{2} - \varepsilon w^{3}); \quad v' = \varepsilon w^{3}(1-v); \quad w' = -\varepsilon w^{4}.    
\end{equation}

As expected from item (d) of Proposition \ref{prop-general-dim}, it follows that, at infinity $\{w = 0\}$, only terms of $P$ play role. Moreover, from item (a), the compactification in the fast direction is not a slow-fast system.
\end{example}

\begin{figure}[h!]
  \begin{overpic}[scale=1.3]{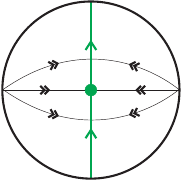}
	\end{overpic}
  \caption{\footnotesize{Phase portrait of the slow-fast system \eqref{eq-exe-prop-planar} in the PL-disk. The critical manifold is highlighted in green.}}
  \label{fig-exe-prop-planar}
\end{figure}

\begin{figure}[h!]
  \center{\includegraphics[width=0.35\textwidth]{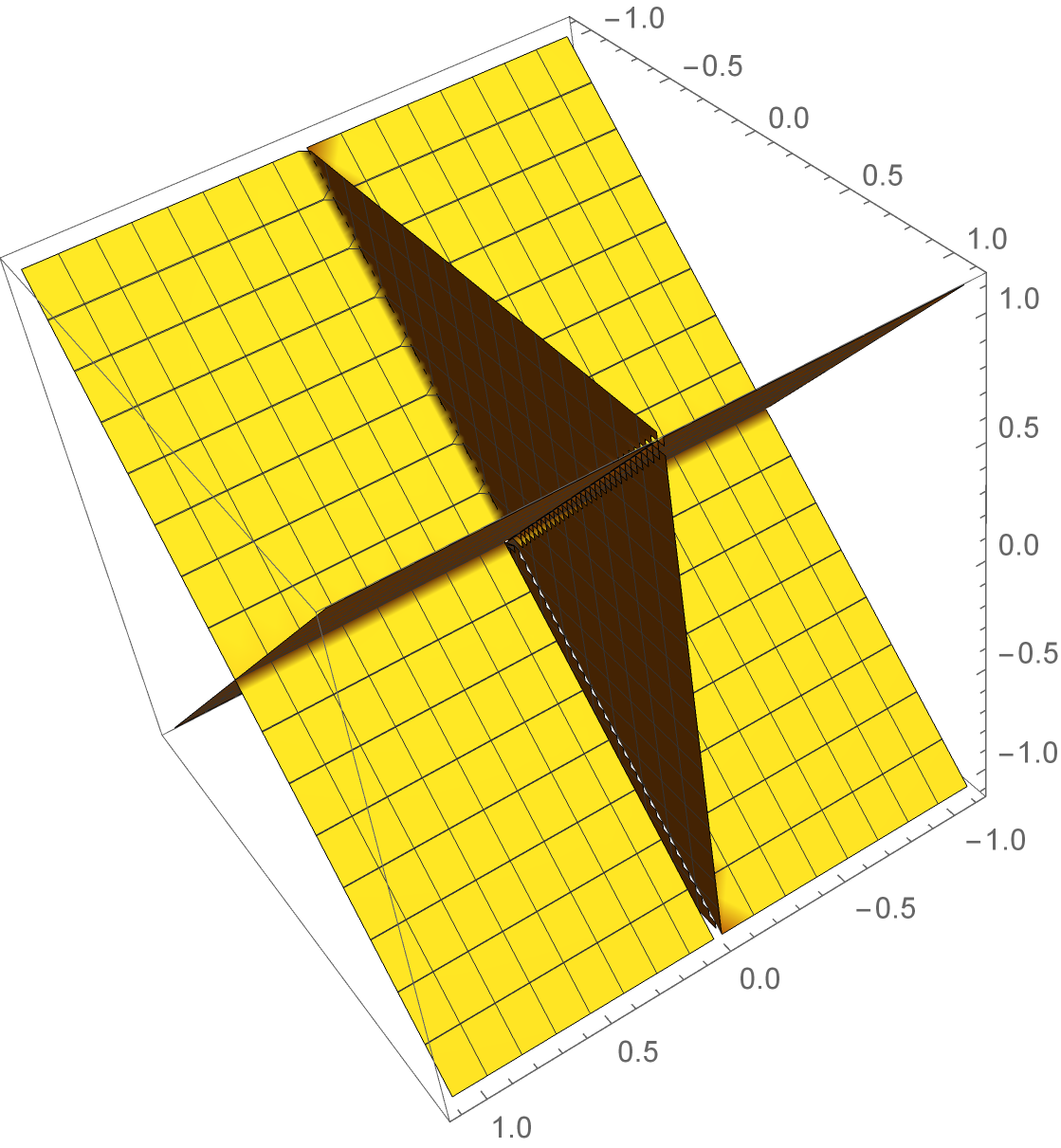}\includegraphics[width=0.35\textwidth]{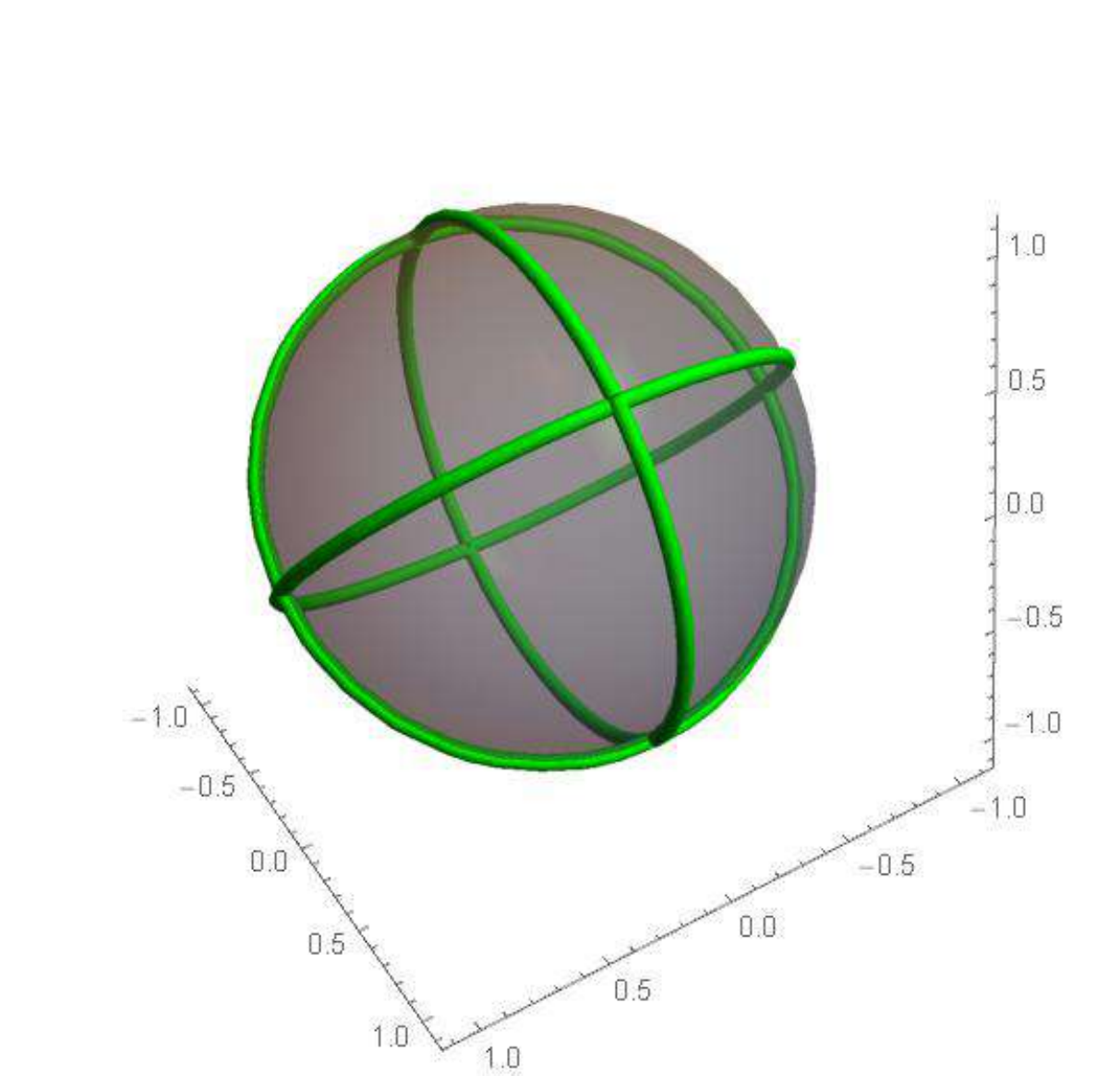}}\\
  \caption{\footnotesize{Phase portrait of the compactified slow-fast systems \eqref{exe-prop-r3-1} and \eqref{exe-prop-r3-infty} in the Poincaré Ball.}}
  \label{fig-exe-prop-r3}
\end{figure}

\section{Geometric singular perturbation theory at infinity}\label{sec-NH-RN}

The main goal of this section is to study conditions in order to assure normal hyperbolicity at infinity of the PL-ball. We start our analysis stating a suitable version of Fenichel Theorem at infinity, which is given in Theorem \ref{teo-fenichel-infty}. Afterwards it is shown that the Newton polytope of a polynomial slow-fast system carries information about the normal hyperbolicity at infinity. Finally, it is given a geometric condition based on the intersection of the critical manifold with infinity that assures normal hyperbolicity.

In Theorem \ref{teo-fenichel-infty}, suppose that $n \geq 3$, consider the polynomial slow-fast system \eqref{eq-def-slowfast-2} and suppose that $C_{0}^{\infty}$ is the critical manifold of the compactified system \eqref{eq-slow-fast-Ul-n} at a generic chart $U_{l}$, for $2\leq l \leq n$. Suppose also that $\delta = \delta_{1} = \delta_{j_{0}}$ for some $j_{0}$ (see item (c) of Proposition \ref{prop-general-dim}).

\begin{theorem}\label{teo-fenichel-infty}
(Fenichel Theorem at infinity) Let $\mathcal{K}\subset\mathcal{NH}(C_{0}^{\infty})$ be a $j$-dimensional compact normally hyperbolic sub-manifold (possibly with boundary) at infinity $\{v_{n} = 0\}$ of the slow system associated to \eqref{eq-slow-fast-Ul-n}. Let $\mathcal{W}^{s}$ be the $(j + j^{s})$-dimensional stable manifold of $\mathcal{K}$. Then, at infinity $\{v_{n} = 0\}$, there is $\tilde{\varepsilon}$ sufficiently small such that for $\varepsilon < \tilde{\varepsilon}$ the following hold:

\begin{description}
    \item[(F1)] There exists a family of smooth manifolds $\mathcal{K}_{\varepsilon}$ such that $\mathcal{K}_{\varepsilon} \rightarrow \mathcal{K}_{0} = \mathcal{K}$ according to Hausdorff distance and $\mathcal{K}_{\varepsilon}$ is a normally hyperbolic locally invariant manifold of \eqref{eq-slow-fast-Ul-n};
    \item[(F2)] There is a family of $(j + j^{s} + k^{s})$-dimensional manifolds $\mathcal{W}^{s}_{\varepsilon}$ such that $\mathcal{W}^{s}_{\varepsilon}$ is local stable manifolds of $\mathcal{K}_{\varepsilon}$.
\end{description}

Analogous conclusions hold for the $(j + j^{u})$-dimensional unstable manifold $\mathcal{W}^{u}$ at infinity.
\end{theorem}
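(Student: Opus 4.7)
The plan is to reduce Theorem \ref{teo-fenichel-infty} to a direct application of the classical Fenichel Theorem, with the compactified system \eqref{eq-slow-fast-Ul-n} playing the role of the ambient slow-fast system. The key conceptual point is that after PL-compactification, what was originally \emph{infinity} becomes the invariant hyperplane $\{v_{n}=0\}$ inside the chart $U_{l}$, and the vector field extends smoothly (indeed, polynomially) across this hyperplane. Hence the local dynamics near infinity can be analyzed by standard finite-dimensional GSPT tools, and no genuinely new analytic estimate is needed beyond those already contained in \cite[Theorem 2.2]{Szmolyan}.

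First I would invoke Proposition \ref{prop-general-dim}(b) to recall that, in any slow-direction chart $U_{l}$ with $2\leq l\leq n$, the compactified field \eqref{eq-slow-fast-Ul-n} is a genuine slow-fast system with fast variable $u$ and slow variables $(v_{2},\ldots,v_{n})$, and by item (c), under the standing hypothesis $\delta=\delta_{1}=\delta_{j_{0}}$, the slow-fast structure survives on the infinity locus $\{v_{n}=0\}$. The last component of \eqref{eq-slow-fast-Ul-n} reads
$$v'_{n}=-\frac{\varepsilon}{\omega_{l}}\sum_{d=-1}^{\delta}v_{n}^{\delta+1-d}Q^{l}_{d},$$
which manifestly vanishes on $\{v_{n}=0\}$, so this hyperplane is invariant for every $\varepsilon\geq 0$. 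Second, by the very definition of $\mathcal{NH}(C_{0}^{\infty})$, the hypothesis $\mathcal{K}\subset\mathcal{NH}(C_{0}^{\infty})$ is nothing but the usual normal-hyperbolicity requirement for \eqref{eq-slow-fast-Ul-n}, namely that the derivative of its fast component with respect to $u$ is nowhere zero along $\mathcal{K}$. Since $\mathcal{K}$ is compact (possibly with boundary), this gives a uniform hyperbolic splitting of the normal bundle into stable/unstable subbundles of dimensions $j^{s}$ and $j^{u}$.

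Third, I would apply the classical Fenichel Theorem to the polynomial, hence smooth, slow-fast system \eqref{eq-slow-fast-Ul-n} on a tubular neighborhood of $\mathcal{K}$ chosen so as to include a slab of $\{v_{n}=0\}$; such a neighborhood exists precisely because $\{v_{n}=0\}$ is invariant, so $\mathcal{K}$ is a normally hyperbolic compact piece of the critical set of the ambient system in the usual sense. This yields at once the family of locally invariant perturbed manifolds $\mathcal{K}_{\varepsilon}$ converging to $\mathcal{K}$ in Hausdorff distance, establishing (F1), together with the local stable manifold $\mathcal{W}^{s}_{\varepsilon}$ of dimension $j+j^{s}+k^{s}$ (the extra $k^{s}$ accounting for the trivially invariant $\varepsilon$-direction), which gives (F2). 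The unstable statement follows by reversing time. The only thing that might look like an obstacle is checking that the Fenichel neighborhood can be placed so that $\mathcal{K}_{\varepsilon}\cap\{v_{n}=0\}$ is nonempty and actually converges to $\mathcal{K}$; this is automatic from invariance of $\{v_{n}=0\}$, because one may first apply Fenichel to the restricted system on the invariant slice $\{v_{n}=0\}$ and then use the standard transverse stable-manifold construction in the $v_{n}$-direction to extend to a full neighborhood, the two constructions being compatible by uniqueness up to exponentially small error.
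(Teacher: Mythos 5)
Your proposal is correct and follows essentially the same route as the paper, which offers no separate written proof of Theorem \ref{teo-fenichel-infty} and implicitly treats it exactly as you do: the compactified field \eqref{eq-slow-fast-Ul-n} is a smooth (polynomial) slow-fast system, $\{v_{n}=0\}$ is invariant for all $\varepsilon$, and the classical Fenichel Theorem of \cite{Fenichel} (in the form of \cite[Theorem 2.2]{Szmolyan}) applies verbatim to a compact normally hyperbolic piece $\mathcal{K}$ of $C_{0}^{\infty}$. Your only loose end is the reading of $k^{s}$ as the $\varepsilon$-direction rather than the transverse $v_{n}$-direction, but since the paper never defines $k^{s}$ this is not a substantive discrepancy.
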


\begin{example}
Consider the slow-fast system
\begin{equation}\label{exe-prop-r3-infty}
 x' = P(x,y,z,\varepsilon) = x(y^{2} - z^{2}); \quad y' = \varepsilon z^{3}; \quad z' = \varepsilon y^{3}.
\end{equation}

After a Poincaré-compactification (PL-compactification with weights $\omega = (1,1,1)$), the systems obtained in $U_{1}$, $U_{2}$ and $U_{3}$ are, respectively,
\begin{equation}
 u' = -u^3 + u v^2 + \varepsilon v^3; \quad v' = \varepsilon u^3 - u^2 v + v^3; \quad w' = w(v^{2} - u^{2});   
\end{equation}

\begin{equation}\label{eq-exe-fenichel-infty-2}
 u' = u(1 - v^{2} - \varepsilon v^{3}); \quad v' = \varepsilon (1-v^{4}); \quad w' = -\varepsilon v^{3}w;   
\end{equation}

\begin{equation}\label{eq-exe-fenichel-infty-3}
 u' = u(v^{2} - 1 - \varepsilon v^{3}); \quad v' = \varepsilon (1-v^{4}); \quad w' = -\varepsilon v^{3}w.    
\end{equation}

Observe that are two non normally hyperbolic points for both systems \eqref{eq-exe-fenichel-infty-2} and \eqref{eq-exe-fenichel-infty-3}. Theorem \ref{teo-fenichel-infty} assures that, at infinity $\{w = 0\}$, the dynamics near compact normally hyperbolic sets persist for $\varepsilon > 0$ sufficiently small. See Figure \ref{fig-exe-prop-r3}.
\end{example}

\subsection{Newton polytope of a polynomial vector field}\label{subsec-newton-polytope}

This subsection is devoted to recall the classical definition of Newton polytope associated to a polynomial vector field (see also \cite{Kappos}). Let $Y = (F_{1},\dots,F_{n})$ be a $n$-dimensional polynomial vector field. For each component $F_{i}$ of $Y$, we introduce the notation

$$
\mathbf{a} = (a_{1},\dots,a_{n}); \quad \mathbf{x} = (x_{1},\dots,x_{n}); \quad \mathbf{x}^{\mathbf{a}} = x_{1}^{a_{1}}\cdot\ldots\cdot x_{n}^{a_{n}};
$$
$$
F_{i}(\mathbf{x}) = \displaystyle\sum_{\mathbf{a}_{i}\in\mathbb{Z}^{n}}c_{\mathbf{a}_{i}}\mathbf{x}^{\mathbf{a}}; \quad c_{\mathbf{a}_{i}}\in\mathbb{R}; \quad 
\mathbf{a}_{i} = (a_{1},\dots,a_{i-1},a_{i}-1,a_{i+1},\dots,a_{n}).
$$

Let $Y = (F_{1},\dots,F_{n})$ be a $n$-dimensional polynomial vector field. The \emph{support of $Y$} is the set $\mathcal{S}_{Y}$ given by
$\mathcal{S}_{Y} = \displaystyle\bigcup_{i = 1}^{n}\mathcal{S}_{Y,i}$, in which $\mathcal{S}_{Y,i} = \{\mathbf{a}_{i}\in\mathbb{Z}^{n}; \ c_{\mathbf{a}_{i}}\neq 0\}$. The \emph{Newton polytope} $\mathcal{P}_{Y}\subset\mathbb{R}^{n}$ of a $n$-dimensional polynomial vector field $Y$ is the convex hull of the support $\mathcal{S}_{Y}$.

\begin{example}\label{exe-newton-planar}
Consider the planar polynomial vector field $Y(x,y) = \big{(}F_{1}(x,y),F_{2}(x,y)\big{)} = (x+y,x^{2})$. It follows that $\mathcal{S}_{Y,1} = \{(0,0),(-1,1)\}$ and $\mathcal{S}_{Y,2} = \{(2,-1)\}$. Therefore $\mathcal{S}_{Y} = \{(0,0),(-1,1),(2,-1)\}$. See Figure \ref{fig-newton-polytope} (a).
\end{example}

\begin{example}\label{exe-newton-three-d}
Consider the $3$-dimensional polynomial vector field $$Y(x,y,z) = \big{(}F_{1}(x,y,z),F_{2}(x,y,z),F_{3}(x,y,z)\big{)} = (-1 + xy,yz^{2},xz).$$
It follows that $\mathcal{S}_{Y,1} = \{(-1,0,0),(0,1,0)\}$, $\mathcal{S}_{Y,2} = \{(0,0,2)\}$ and $\mathcal{S}_{Y,3} = \{(1,0,0)\}$. Therefore $\mathcal{S}_{Y} = \{(-1,0,0),(1,0,0),(0,1,0),(0,0,2)\}$. See Figure \ref{fig-newton-polytope} (b).
\end{example}

\begin{figure}[h!]
  \begin{overpic}[scale=1.3]{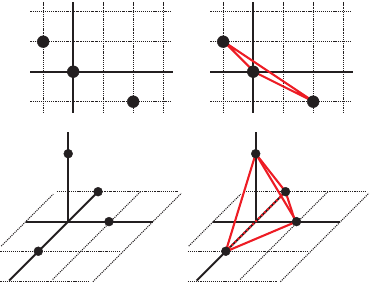}
		\put(-10,55){(a)}
        \put(-10,10){(b)}
		\end{overpic}
  \caption{\footnotesize{Figure (a): Support (left) and Newton polytope (right) of the planar polynomial vector field of the Example \ref{exe-newton-planar}. Figure (b): Support (left) and Newton polytope (right) of the 3-dimensional polynomial vector field of the Example \ref{exe-newton-three-d}.}}
  \label{fig-newton-polytope}
\end{figure}

\subsection{Normal hyperbolicity at infinity}

We have already stated all preliminary definitions and results needed to prove our main results. In what follows, it will be studied necessary conditions in order to assure normal hyperbolicity at infinity. Denote $P(x,\mathbf{y},\varepsilon) = \displaystyle\sum_{d = -1}^{\delta_{1}}P_{ d}(x,\mathbf{y},\varepsilon)$, which $P_{d}$ is a quasi homogeneous polynomial of type $\omega$ of degree $d+\omega_{1}$ and $\delta_{1}+\omega_{1} =\deg_{\omega}{P}$. The degree of quasi homogeneity type $\omega$ of the slow-fast system \eqref{eq-def-slowfast-2} is denoted by $\deg_{\omega}X_{\varepsilon} = \delta \geq \delta_{1}$.

\begin{mtheorem}\label{teo-a} Let $X_{\varepsilon}$ be a $n$-dimensional polynomial vector field associated to the slow-fast system \eqref{eq-def-slowfast-2}, whose critical manifold is given by $C_{0} = \{P(x,\mathbf{y},0) = 0\}$. Then, near the boundary of the PL ball the following hold:

\begin{description}

\item[(a)] Suppose $\delta = \delta_{1}$. If the component $P$ of type $\omega$ and degree $\delta_{1} + \omega_{1}$ has monomials of the form
$$c_{1}x^{r_{1}+1} + \displaystyle\sum_{i = 2}^{n}\big{(}c_{i}xy_{i}^{r_{i}} + d_{i}y_{i}^{s_{i}}\big{)},$$
satisfying $c_{i}^{2} + d_{i}^{2} \neq 0$ for all $i = 1,\dots,n$; and $(r_{1},r_{2},\dots,r_{n})$ and $(r_{1},s_{2},\dots,s_{n})$ satisfy the equation of the hyperplane $\{\omega_{1}a_{1} + \ldots + \omega_{n}a_{n} = \delta\}$, then the origin of each chart $U_{l}$ is a normally hyperbolic point of the critical manifold. In particular, the origin of $U_{1}$ is a hyperbolic node of the compactified vector field.
\item[(b)] If $\delta > \delta_{1}$, the infinity $\{v_{n} = 0\}$ is a non normally hyperbolic component of the critical manifold $C_{0}$.
\item[(c)] If $p\in U_{l}$ is a point at infinity (with $2 \leq l \leq n$), a necessary condition to assure normal hyperbolicity is that $C_{0}$ intersects the infinity $\{v_{n} = 0\}$ transversely at $p$.
\end{description}
\end{mtheorem}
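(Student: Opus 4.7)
The plan is to analyze the linearization of the compactified vector field $X^{\infty}_{\varepsilon}$ at infinity, chart by chart, using the explicit expressions \eqref{eq-slow-fast-U1-n} and \eqref{eq-slow-fast-Ul-n}. The common observation underlying all three items is that on $\{v_n = 0\}$ only the top quasi-homogeneous component $P_{\delta_1}$ of $P$ can contribute, and the prescribed monomial form in item (a) is engineered so that $P_{\delta_1}$ and its $x$-derivative, evaluated at the points corresponding to the origins of the charts $U_l$, yield exactly the constants $d_l$ and $c_l$.

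For item (a) in $U_1$, I set $\varepsilon=0$ in \eqref{eq-slow-fast-U1-n} and observe that each equation factors through the common scalar $F(u,\mathbf{v}):=\sum_{d} v_n^{\delta-d}P_d(1,u,v_2,\dots,v_{n-1},0)$; explicitly, each equation takes the form $-(\text{variable})\cdot(\omega/\omega_1)\cdot F$ with the appropriate weight. Hence the Jacobian at the origin is the diagonal matrix $-(F(0)/\omega_1)\operatorname{diag}(\omega_2,\dots,\omega_n,1)$, and since $F(0)=P_{\delta_1}(1,0,\dots,0)=c_1\neq 0$ all eigenvalues are nonzero and of the same sign, i.e.\ a hyperbolic node. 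For $U_l$ with $l\geq 2$, $P_{\delta_1}$ evaluated at the origin (the point with $1$ in the $l$-th slot and zeros elsewhere) equals $d_l$, so the origin lies on $C_0^{\infty}$ iff $d_l=0$; in that case the hypothesis $c_l^2+d_l^2\neq 0$ forces $c_l\neq 0$, and differentiating the fast equation of \eqref{eq-slow-fast-Ul-n} in $u$ at $\varepsilon=0,\,v_n=0$ gives $\partial_x P_{\delta_1}$ at the same point, which equals $c_l\neq 0$, establishing normal hyperbolicity.

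For item (b), whenever $\delta>\delta_1$ every summand $v_n^{\delta-d}P_d$ in the fast equation of \eqref{eq-slow-fast-Ul-n} carries the factor $v_n^{\delta-\delta_1}\geq v_n$ (since $d\leq\delta_1<\delta$), so at $\varepsilon=0$ the fast equation vanishes identically on $\{v_n=0\}$, showing that the whole infinity is a component of $C_0$; the same $v_n$-factor annihilates $\partial_u$ of the fast equation there, so normal hyperbolicity fails. For item (c), after using (b) to reduce to the case $\delta=\delta_1$, normal hyperbolicity at $p\in U_l\cap\{v_n=0\}$ is equivalent to $\partial_u P_{\delta_1}(p)\neq 0$; this non-vanishing \emph{tangential} derivative of the local defining function of $C_0^{\infty}$ forces its gradient not to be collinear with $\nabla v_n$, which is precisely transversality of the intersection $C_0\pitchfork\{v_n=0\}$ at $p$.

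The step I expect to be delicate is the book-keeping in item (a): one must verify from the prescribed monomial form that $P_{\delta_1}(0,\dots,1,\dots,0)=d_l$ and $\partial_x P_{\delta_1}(0,\dots,1,\dots,0)=c_l$. At such an evaluation point the first coordinate vanishes, so every $c_1 x^{r_1+1}$ and $c_i x y_i^{r_i}$ term disappears and only $d_l y_l^{s_l}|_{y_l=1}=d_l$ survives; differentiation in $x$ then eliminates every $d_i y_i^{s_i}$ term and, at the same point, kills both $c_1(r_1+1)x^{r_1}$ (using $r_1\geq 1$) and $c_i y_i^{r_i}$ for $i\neq l$, leaving $c_l$. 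The hyperplane assumption on $(r_1,r_2,\dots,r_n)$ and $(r_1,s_2,\dots,s_n)$ guarantees, via quasi-homogeneity of type $\omega$ and degree $\delta+\omega_1$, that no additional monomials in $P_{\delta_1}$ can contribute to these two evaluations, so the calculation is complete.
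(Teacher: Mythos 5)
Your proposal is correct and follows essentially the same route as the paper: it reduces everything to the criterion $v_n^{\delta-\delta_1}P_{\delta_1}=0$, $v_n^{\delta-\delta_1}\partial_u P_{\delta_1}\neq 0$ in each chart, runs the same dichotomy on $d_l$ versus $c_l$ at the chart origins for item (a), kills all terms by the factor $v_n^{\delta-\delta_1}$ for item (b), and uses the non-collinearity of $\nabla P_{\delta}(p)$ with $(0,\dots,0,1)$ for item (c). The only additions are the explicit diagonal Jacobian computation in $U_1$ and the monomial bookkeeping, both of which the paper asserts without detail, so there is no substantive divergence.
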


\begin{remark}
The hypotheses of Condition (a) of Theorem \ref{teo-a} implies that the Newton polytope $\mathcal{P}_{X_{\varepsilon}}$ has a compact face containing the intersection of the hyperplane $\{\omega_{1}a_{1} + \ldots + \omega_{n}a_{n} = \delta\}$ with $(\mathbb{R}_{\geq 0})^{n}$. Such feature of the Newton polytope turns out to be a necessary condition in order to assure normal hyperbolicity at the origin of each chart at infinity. From a practical way of view, one can use the Newton polytope in order to detect non normally hyperbolic points of the origin of each chart. Finally, observe that in condition (c) we do not require that the point $p$ is the origin.
\end{remark}
\begin{proof}

From Proposition \ref{prop-general-dim}, we know that the compactification $X^{\infty}_{ \varepsilon}$ defines a slow fast system in the charts $U_{l}$ for $l = 2,\dots,n$, but it is not in the chart $U_{1}$. From Equation \eqref{eq-slow-fast-Ul-n}, we obtain the expression of the critical manifold $C_{0} = \{P(x,\mathbf{y},0) = 0\}$ in the chart $U_{l}$:
$$
\displaystyle\sum_{d = 0}^{\delta}v^{\delta-d}_{n}P_{ d}\big{(}u,\dots,v_{l-1}, 1,v_{l},\dots,v_{n-1},0\big{)} = 0.
$$

Therefore, normal hyperbolicity near infinity $\{v_{n} = 0\}$ means
\begin{equation}\label{eq-sis-NH-inf}
v^{\delta-\delta_{1}}_{n}P_{ \delta_{1}} =  0, \ \quad \ 
v^{\delta-\delta_{1}}_{n}\displaystyle\frac{\partial P_{ \delta_{1}}}{\partial u} \neq 0;
\end{equation}
in which such functions are applied in $(u,\dots,v_{l-1}, 1,v_{l},\dots,v_{n-1},0)$. So we divide our analysis in two cases.

\textbf{(a)} Suppose that $\delta = \delta_{1}$. In this case, a necessary condition to assure that the origin of the chart $U_{l}$ is normally hyperbolic is to require that the original polynomial $P$ has monomials of the form $c_{l}xy^{r_{l}}_{l} + d_{l}y^{s_{l}}_{l}$, in which $c_{l},d_{l}\in\mathbb{R}$, $r_{l},s_{l}\in\mathbb{N}$ and $c_{l}^{2} + d_{l}^{2} \neq 0$, $r_{l} = \frac{\delta}{\omega_{l}}$ and $s_{l} = \frac{\delta +\omega_{1}}{\omega_{l}}$.

Indeed, if $d_{l} \neq 0$, then the critical set $C_{0}^{\infty}$ does not intersect the origin of the chart $U_{l}$. On the other hand, if $d_{l} = 0$ then $c_{l} \neq 0$ and the critical set $C_{0}^{\infty}$ is normally hyperbolic at the origin of $U_{l}$. Recall that $C_{0}^{\infty}$ is the critical manifold of the compactified system \eqref{eq-slow-fast-Ul-n} at a generic chart $U_{l}$, for $2\leq l \leq n$.

Concerning the support $\mathcal{S}_{X_{\varepsilon}}$, if $c_{l} \neq 0$ then $\mathcal{S}_{X_{\varepsilon}}$ contains the point $(0,\dots,0,r_{l},0,\dots,0)$, in which $r_{l}$ is positioned in the $l$-th coordinate. Finally, if $d_{l} \neq 0$ then $\mathcal{S}_{X_{\varepsilon}}$ contains the point $(-1,0,\dots,0,s_{l},0,\dots,0)$, in which $s_{l}$ is positioned in the $l$-th coordinate. 

The compactification of the critical manifold may be normally hyperbolic in one chart and not be in another chart. Therefore, in order to assure that, for all $l = 2,\dots,n$ the origin of $U_{l}$ is normally hyperbolic, we must require that the quasi homogeneous component of $P$ of type $\omega$ and degree $\delta_{1} + \omega_{1}$ has monomial of the form
$$x\Big{(}c_{1}x^{r_{1}} + c_{2}y_{2}^{r_{2}} + \dots + c_{n}y_{n}^{r_{n}}\Big{)} + \Big{(}d_{2}y_{2}^{s_{2}} + \dots + d_{n}y_{n}^{s_{n}} \Big{)},$$
in which $c_{i}^{2} + d_{i}^{2} \neq 0$ for all $i = 1,\dots,n$.

Observe that the points of $\mathcal{S}_{X_{\varepsilon}}$ related to these monomials are contained in the hyperplane $\{\omega_{1}a_{1} + \ldots + \omega_{n}a_{n} = \delta\}$. Moreover, since all the natural numbers $r_{i}$ and $s_{i}$ concerns higher order terms of the vector field $X_{\varepsilon}$, then all the other points of the support $\mathcal{S}_{X_{\varepsilon}}$ are either contained in such hyperplane, or they are contained in the half-space $\{\omega_{1}a_{1} + \ldots + \omega_{n}a_{n} < \delta\}$. This implies that the Newton polytope $\mathcal{P}_{X_{\varepsilon}}$ has a compact face that contains in the hyperplane $\{\omega_{1}a_{1} + \ldots + \omega_{n}a_{n} = \delta\}\cap(\mathbb{R}_{\geq 0})^{n}$. See Figure \ref{fig-newton-NH}. If this is the case, in the chart $U_{1}$ the origin is a hyperbolic node of the compactified vector field.

\begin{figure}[h!]
  \begin{overpic}[scale=1.3]{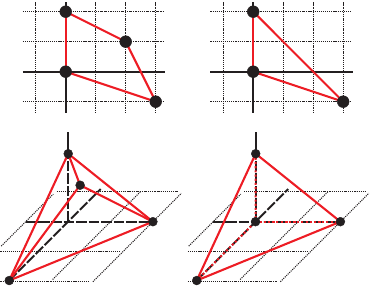}
		\put(-10,55){(a)}
        \put(-10,10){(b)}
		\end{overpic}
  \caption{\footnotesize{Figure (a): On the left, an example of Newton polytope of that gives rise to non normally hyperbolic points at infinity. On the other hand, the slow-fast system associated to the Newton polytope on the right will present normally hyperbolic points at the origin of $U_{1}$ and $U_{2}$ (under a suitable choice of $\omega$). Figure (b): description analogous to the Figure (a), but for the three dimensional case.}}
  \label{fig-newton-NH}
\end{figure}

\textbf{(b)} Suppose that $\delta > \delta_{1}$. From \eqref{eq-sis-NH-inf}, it is clear that, for each chart $U_{l}$, the infinity $\{v_{n} = 0\}$ is a non normally hyperbolic component of the critical manifold.

\textbf{(c)} Denote by $C_{0}^{\infty}$ the critical manifold $C_{0}$ in the chart $U_{l}$, for each $l = 2,\dots,n$. The infinity is represented by the hyperplane $\{v_{n} = 0\}$, and the vector $(0,\dots,0,1)$ is normal to such hyperplane, for every point $p\in\{v_{n} = 0\}$. On the other hand, the vector $\nabla P_{ \delta}(p)$ is normal to the critical manifold at $p$. If the critical manifold is normally hyperbolic at $p$, from equation \eqref{eq-sis-NH-inf} we know that the first coordinate of $\nabla P_{ \delta}(p)$ is non zero. Therefore, $(0,\dots,0,1)$ and $\nabla P_{ \delta}(p)$ are linearly independent, which implies that $T_{p}C_{0}^{\infty}\pitchfork T_{p}\{v_{n} = 0\}$. We conclude that a necessary condition to assure normal hyperbolicity at infinity is that the critical manifold intersects the infinity transversely. See Figure \ref{fig-NH-transversal}.
\end{proof}

\begin{figure}[h!]
  \begin{overpic}[scale=0.9]{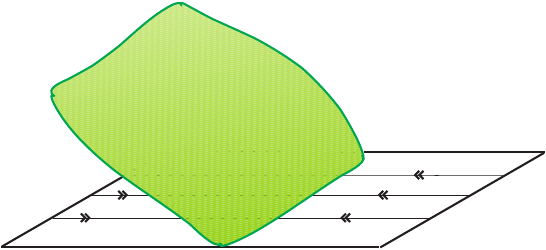}
        \put(-15,5){$\{v_{n} = 0\}$}
        \put(50,40){$C_{0}^{\infty}$}
		\end{overpic}
  \caption{\footnotesize{Critical manifold $C_{0}^{\infty}$ (highlighted in green) intersects the infinity transversally.}}
  \label{fig-NH-transversal}
\end{figure}

\begin{remark}
From Propositon \ref{prop-general-dim} item (d), in each chart $U_{l}$ we know that, if $\delta > \delta_{1}$, by setting $\varepsilon = 0$, the infinity $\{v_{n} = 0\}$ is filled with equilibrium points. Due to item (b) of Theorem \ref{teo-a}, now we know that, in fact, the infinity is a component of the critical manifold $C_{0}$.
\end{remark}

In what follows we present an example showing that transversality is not a sufficient condition to assure normal hyperbolicity at infinity.

\begin{example}
Consider the polynomial slow-fast system
\begin{equation}\label{eq-exe-non-transversal}
x' = y + z, \quad y' = \varepsilon(x+z), \quad z' = \varepsilon(x+y).
\end{equation}

The critical manifold associated to \eqref{eq-exe-non-transversal} is the plane $C_{0} = \{y+z = 0\}$, which intersects the infinty transversaly. However, after PL - compactification with weight $\omega = (1,1,1)$, one obtains the following slow-fast system in both charts $U_{2}$ and $U_{3}$:
$$
u' = 1  + v - \varepsilon u(u + v), \quad v' = \varepsilon(1 + u - uv - v^{2}), \quad
    w' = - \varepsilon w(u + v).
$$

In both charts in $U_{2}$ and $U_{3}$, the infinity and the critical manifold are given by $\{w = 0\}$ and $C_{0}^{\infty} = \{v + 1 = 0\}$, respectively. This implies that $\mathcal{NH}(C_{0}^{\infty}) = \emptyset$. Geometrically, in $U_{2}$ and $U_{3}$ the set $C_{0}^{\infty}$ is a horizontal line. See Figure \ref{fig-exe-non-transversal}.
\end{example}

\begin{figure}[h!]
	\center{\includegraphics[width=0.55\textwidth]{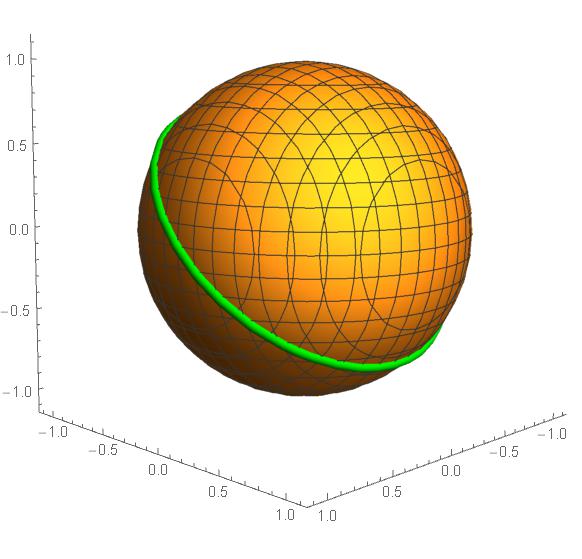}}
	\caption{\footnotesize{Compactification of the critical manifold of the slow fast system \eqref{eq-exe-non-transversal}. The critical manifold $C_{0}$ is highlighted in green.}}
	\label{fig-exe-non-transversal}
\end{figure}

It is very difficult to study conditions to assure normal hyperbolicity for the \emph{whole} infinity in arbitrary dimension. However, it can be given an answer for the $2$-dimensional case (see Theorem \ref{teo-fenichel-global-r2}). For instance, it will be clear that, in dimension 2, the transversality condition presented in Theorem \ref{teo-a} is sufficient and necessary to assure normal hyperbolicity at infinity.

\section{Planar polynomial slow-fast systems}\label{sec-planar-gspt}

\noindent

Consider the 2-dimensional polynomial slow-fast system
\begin{equation}\label{eq-polynomial-slow-fast-r2}
x' = P(x,y,\varepsilon); \ \quad \ y' = \varepsilon Q(x,y,\varepsilon).
\end{equation}

As usual, $P_{ d}$ and $Q_{ d}$ are the quasi homogeneous component of type $\omega$ and degree $d+\omega_{1}$ and $d+\omega_{2}$, respectively. The highest quasihomogeneous degree component of $P$ and $Q$ is, respectively, $P_{ \delta_{1}}$ and $Q_{ \delta_{2}}$. Due to statements (a) and (b) of Proposition \ref{prop-general-dim}, for our purposes in this section we will further suppose that $\delta_{1} = \delta_{2} = \delta$. 

The polynomial functions $P$ and $Q$ will be written as
$$P(x,y,\varepsilon) = \displaystyle\sum_{i = -1}^{\delta}P_{ i}(x,y,\varepsilon), \quad Q(x,y,\varepsilon) = \displaystyle\sum_{j = -1}^{\delta}Q_{ j}(x,y,\varepsilon);$$
$$P_{ i}(x,y,\varepsilon) = \displaystyle\sum_{r = 0}^{i}c_{\varepsilon,r,i}x^{r}y^{\frac{i+\omega_{1}(1-r)}{\omega_{2}}}, \quad Q_{ j}(x,y,\varepsilon) = \displaystyle\sum_{s = 0}^{j}d_{\varepsilon,s,j}x^{s}y^{\frac{j+\omega_{2}-\omega_{1}s}{\omega_{2}}};$$
in which $\delta = \deg_{\omega}X_{\varepsilon}$ and the notation $c_{\varepsilon,r,i},\ d_{\varepsilon,s,j}$ indicates that the coefficients of $P$ and $Q$ depend analytically on $\varepsilon$. Observe that, for each $i$ and $j$, the powers of the monomials of $P_{ i}(x,y,\varepsilon)$ and $Q_{ j}(x,y,\varepsilon)$ satisfies, respectively, $a\omega_{1}+b\omega_{2} = i + \omega_{1}$ and $a\omega_{1}+b\omega_{2} = j + \omega_{2}$.

The compactification $X^{\infty}_{ \varepsilon}$ in the fast and slow direction is given by, respectively,
\begin{equation}\label{eq-sys-compact-U1}
\left\{
  \begin{array}{rcl}
   u' & = & \displaystyle\sum_{i = -1}^{\delta}v^{\delta-i}\Big{(}-uP_{ i}(1,u,\varepsilon) + \varepsilon Q_{ i}(1,u,\varepsilon)\Big{)},  \\
   v' & = & -\displaystyle\sum_{i = -1}^{\delta}v^{\delta-i+1}P_{ i}(1,u,\varepsilon); 
  \end{array}
\right.
\end{equation}
\begin{equation}\label{eq-sys-compact-U2}
\left\{
  \begin{array}{rcl}
   u' & = & -\displaystyle\sum_{i = -1}^{\delta}v^{\delta-i}\Big{(}-u\varepsilon Q_{ i}(u,1,\varepsilon) +  P_{ i}(u,1,\varepsilon)\Big{)}  \\
    v' & = & -\varepsilon\displaystyle\sum_{i = -1}^{\delta}v^{\delta-i+1}Q_{ i}(u,1,\varepsilon).
  \end{array}
\right.
\end{equation}

Before we start our analysis in the charts $U_{1}$ and $U_{2}$, let us introduce an useful Lemma, that can be found for instance in \cite[pp.~72]{GarciaLequain}. Recall that $x_{0}$ is a \emph{simple root} of a polynomial $P\in\mathbb{R}[x]$ if $P(x) = (x - x_{0})\cdot \widetilde{P}(x)$ and $x_{0}$ is not a root of $\widetilde{P}(x)$.

\begin{lemma}\label{lemma-simple-root}
Let $P\in\mathbb{R}[x]$. Then $x_{0}$ is a simple root of $P$ if, and only if, $P(x_{0}) = 0$ and $P'(x_{0})\neq 0$.
\end{lemma}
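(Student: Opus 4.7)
The plan is to prove both implications directly from the definition of a simple root together with the product rule for differentiation, using the factor theorem for polynomials on the reverse implication.

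For the forward direction, I would assume $x_0$ is a simple root, so by definition $P(x) = (x - x_0)\widetilde{P}(x)$ with $\widetilde{P}(x_0) \neq 0$. Evaluating at $x_0$ immediately gives $P(x_0) = 0$. Differentiating via the product rule yields
\begin{equation*}
P'(x) = \widetilde{P}(x) + (x - x_0)\widetilde{P}'(x),
\end{equation*}
and setting $x = x_0$ gives $P'(x_0) = \widetilde{P}(x_0) \neq 0$.

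For the reverse direction, I would start from $P(x_0) = 0$ and invoke the factor theorem (an easy consequence of the Euclidean algorithm for $\mathbb{R}[x]$) to write $P(x) = (x - x_0)\widetilde{P}(x)$ for some $\widetilde{P}\in\mathbb{R}[x]$. The same product rule computation as above gives $P'(x_0) = \widetilde{P}(x_0)$; the hypothesis $P'(x_0) \neq 0$ then forces $\widetilde{P}(x_0) \neq 0$, so $x_0$ is by definition a simple root.

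Since the argument is entirely standard and the two directions each amount to a one-line computation together with the factor theorem, there is no real obstacle here — the lemma is essentially a bookkeeping statement. The only minor subtlety worth being explicit about is the invocation of the factor theorem in the reverse direction, which guarantees the factorization $P(x) = (x - x_0)\widetilde{P}(x)$ from the single condition $P(x_0) = 0$; this is what converts the vanishing condition into the required structural form in the definition of a simple root.
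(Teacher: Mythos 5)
Your proof is correct and is exactly the standard argument; the paper itself does not prove this lemma but simply cites it from a textbook (Garcia--Lequain), so your factor-theorem-plus-product-rule derivation supplies precisely the omitted routine verification.
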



Let us start our
study by considering the compactification in the slow direction \eqref{eq-sys-compact-U2}.

\begin{proposition}\label{prop-nh-intersection}
Let \eqref{eq-polynomial-slow-fast-r2} be a planar polynomial slow-fast system and consider its PL-compactification in the slow direction \eqref{eq-sys-compact-U2}. Then $(\tilde{u},0)$ is an equilibrium point at infinity $\{v = 0\}$ for $\varepsilon = 0$ if, and only if, the critical manifold $C_{0}$ intersects the infinity at $(\tilde{u},0)$. 
\end{proposition}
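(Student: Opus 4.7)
The plan is to verify both conditions directly from the defining equations \eqref{eq-sys-compact-U2} and the expression of the critical manifold in the chart $U_{2}$, using the quasi homogeneity of each component $P_{i}$.

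First I would compute the vector field \eqref{eq-sys-compact-U2} at an arbitrary point $(\tilde{u},0)$ of the infinity for $\varepsilon = 0$. Observe that every summand of $v'$ carries a factor $v^{\delta-i+1}$ with exponent at least one, so $v'|_{v=0}=0$ automatically for every $\varepsilon$. For $u'$, at $v=0$ only the term with $\delta - i = 0$ survives, so that
\begin{equation*}
u'\big|_{v=0,\,\varepsilon=0} \;=\; -P_{\delta}(\tilde{u},1,0).
\end{equation*}
Hence $(\tilde{u},0)$ is an equilibrium at $\varepsilon=0$ if and only if $P_{\delta}(\tilde{u},1,0)=0$.

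Next I would identify the expression of $C_{0}=\{P(x,y,0)=0\}$ in the chart $U_{2}$. Using the change of coordinates $x=u/v^{\omega_{1}}$, $y=1/v^{\omega_{2}}$ and the fact that each $P_{i}$ is quasi homogeneous of type $\omega$ and degree $i+\omega_{1}$, I get
\begin{equation*}
P_{i}\!\left(\tfrac{u}{v^{\omega_{1}}},\tfrac{1}{v^{\omega_{2}}},0\right) \;=\; v^{-(i+\omega_{1})}\,P_{i}(u,1,0),
\end{equation*}
and after multiplying $P(x,y,0)=0$ by $v^{\delta+\omega_{1}}$ (which does not introduce spurious solutions since we are interested in a neighborhood of $\{v=0\}$ where $v$ only appears as a clearing factor), the critical manifold becomes
\begin{equation*}
C_{0}^{\infty} \;=\; \Bigl\{(u,v)\,:\,\sum_{i=-1}^{\delta} v^{\delta-i}\,P_{i}(u,1,0)=0\Bigr\}.
\end{equation*}
Intersecting with the infinity $\{v=0\}$, the only surviving term is the one with $i=\delta$, yielding the condition $P_{\delta}(\tilde{u},1,0)=0$.

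Combining both steps, the characterization $P_{\delta}(\tilde{u},1,0)=0$ is simultaneously equivalent to $(\tilde{u},0)$ being an equilibrium of \eqref{eq-sys-compact-U2} at $\varepsilon=0$ and to $C_{0}$ intersecting the infinity at $(\tilde{u},0)$, which yields the claimed equivalence. There is no genuine obstacle here: the argument is a bookkeeping exercise consisting of applying the quasi homogeneity identity $P_{i}(\lambda^{\omega_{1}}x,\lambda^{\omega_{2}}y)=\lambda^{i+\omega_{1}}P_{i}(x,y)$ with $\lambda=1/v$ and tracking which powers of $v$ survive upon setting $v=0$. The only subtle point to mention is that the $v$-equation vanishes identically on $\{v=0\}$, so the equilibrium condition at infinity reduces entirely to a condition on the $u$-equation, which is precisely the transversal slice of $C_{0}^{\infty}$ with $\{v=0\}$.
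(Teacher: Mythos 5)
Your argument is correct and follows essentially the same route as the paper: both reduce the two conditions to the single algebraic statement $P_{\delta}(\tilde{u},1,0)=0$, using that only the $i=\delta$ term of \eqref{eq-sys-compact-U2} survives on $\{v=0\}$ and that the quasi homogeneity identity turns $C_{0}$ into $\sum_{i}v^{\delta-i}P_{i}(u,1,0)=0$ in the chart $U_{2}$. Your version merely spells out the bookkeeping that the paper's one-line proof leaves implicit.
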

\begin{proof}
Recall that $\delta = \deg_{\omega}X_{\varepsilon}$. Therefore, in the chart $U_{2}$ the critical manifold $C_{0}$ is given by $\{P_{ \delta}(u,1,0) = 0\}$, which is a curve of equilibria of system \eqref{eq-sys-compact-U2} when $\varepsilon = 0$. Such a curve intersects the infinity at points of the form $(\tilde{u},0)$, with $\tilde{u}$ being a root of the polynomial $P_{ \delta}(u,1,0)$, which concludes the proof.
\end{proof}

\begin{proposition}\label{prop-planar-equivalence-NH}
Let $p = (\tilde{u},0)\in\{v = 0\}$ an equilibrium point of \eqref{eq-sys-compact-U2} positioned at infinity. Then the following statements are equivalent:
\begin{description}
    \item[(a)] $p$ is normally hyperbolic for \eqref{eq-sys-compact-U2} when $\varepsilon = 0$.
    \item[(b)] $\tilde{u}$ is a hyperbolic equilibrium point of the ODE $\dot{u} = P_{ \delta}(u,1,0)$.
    \item[(c)] $\tilde{u}$ is a simple root of the polynomial $P_{ \delta}(u,1,0)$.
    \item[(d)] The critical manifold intersects the infinity transversely.
\end{description}
\end{proposition}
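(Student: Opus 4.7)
The plan is to reduce every condition to the single scalar quantity $\displaystyle\frac{\partial P_{\delta}}{\partial u}(\tilde{u},1,0)$ and then read off the four equivalences. The preliminary observation is that, setting $\varepsilon=0$ in \eqref{eq-sys-compact-U2} and restricting to the invariant line $\{v=0\}$, only the $i=\delta$ term of the sum survives (since $v^{\delta-i}=0$ when $i<\delta$), so the dynamics on infinity reduces to $\dot u = -P_{\delta}(u,1,0)$. In particular, the equilibria of \eqref{eq-sys-compact-U2} on $\{v=0\}$ are exactly the zeros of $P_{\delta}(u,1,0)$, consistent with Proposition~\ref{prop-nh-intersection}.

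The order I would follow is: (b)$\Leftrightarrow$(c), then (a)$\Leftrightarrow$(b), then (a)$\Leftrightarrow$(d).

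\emph{(b)$\Leftrightarrow$(c).} The equilibrium $\tilde u$ of $\dot u = P_{\delta}(u,1,0)$ is hyperbolic precisely when $\tfrac{\partial P_{\delta}}{\partial u}(\tilde u,1,0)\neq 0$. By Lemma~\ref{lemma-simple-root}, this is equivalent to $\tilde u$ being a simple root of $P_{\delta}(\,\cdot\,,1,0)$.

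\emph{(a)$\Leftrightarrow$(b).} Near infinity, \eqref{eq-sys-compact-U2} at $\varepsilon=0$ is a slow-fast system with fast variable $u$ and slow variable $v$. The critical manifold in this chart is $C_{0}^{\infty}=\{F(u,v)=0\}$, where $F(u,v)=\sum_{i=-1}^{\delta}v^{\delta-i}P_{i}(u,1,0)$. Normal hyperbolicity of $C_{0}^{\infty}$ at $(\tilde u,0)$ means $\tfrac{\partial}{\partial u}\bigl(-F(u,v)\bigr)\neq 0$ there; evaluated at $v=0$ this is exactly $-\tfrac{\partial P_{\delta}}{\partial u}(\tilde u,1,0)\neq 0$, which is (b).

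\emph{(a)$\Leftrightarrow$(d).} The infinity $\{v=0\}$ has normal direction $(0,1)$. I would compute
\[
\nabla F(\tilde u,0)=\Bigl(\tfrac{\partial P_{\delta}}{\partial u}(\tilde u,1,0),\;P_{\delta-1}(\tilde u,1,0)\Bigr),
\]
so $T_{(\tilde u,0)}C_{0}^{\infty}+T_{(\tilde u,0)}\{v=0\}=\mathbb{R}^{2}$ precisely when $\tfrac{\partial P_{\delta}}{\partial u}(\tilde u,1,0)\neq 0$, giving (a)$\Leftrightarrow$(d). This also matches the general necessary condition in Theorem~\ref{teo-a}(c), but in dimension two it is now also sufficient.

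The only subtlety, and hence the main thing to be careful about, is that $C_{0}^{\infty}$ need not even be a smooth curve at $(\tilde u,0)$ if the gradient of $F$ vanishes there; in that case none of the four conditions hold and the equivalences are trivially true, so the argument really only needs the direct computation above. Everything else is a one-line reduction once the expression $u'=-P_{\delta}(u,1,0)$ on $\{v=0\}$ is in hand.
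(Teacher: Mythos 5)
Your proof is correct and follows essentially the same route as the paper's: both arguments reduce all four conditions to the single scalar condition $\frac{\partial P_{\delta}}{\partial u}(\tilde{u},1,0)\neq 0$, invoke Lemma~\ref{lemma-simple-root} for the simple-root equivalence, and use the linear independence of the gradient and the normal vector $(0,1)$ for transversality. Your explicit computation of $\nabla F$ and your remark on the degenerate case $\nabla F(\tilde u,0)=0$ are slightly more careful than the paper's wording, but they do not constitute a different approach.
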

\begin{proof}
Observe that (a) $\Leftrightarrow$ (b) because $\frac{\partial}{\partial u}P_{ \delta}(\tilde{u},1,0) \neq 0$ if, and only if, $\tilde{u}$ is a hyperbolic equilibrium point of the ODE $\dot{u} = P_{ \delta}(u,1,0)$. In addition, from Lemma \ref{lemma-simple-root}, it follows that (a) $\Leftrightarrow$ (c) because $\frac{\partial}{\partial u}P_{ \delta}(\tilde{u},1,0) \neq 0$ and  $P_{ \delta}(\tilde{u},1,0) = 0$ if, and only if, $\tilde{u}$ is a simple root of $P_{ \delta}(u,1,0)$. The equivalence (a) $\Leftrightarrow$ (d) follows because $\frac{\partial}{\partial u}P_{ \delta}(\tilde{u},1,0) \neq 0$ means that $\nabla P_{ \delta}(\tilde{u},1,0)$ and $\vec{n} = (0,1)$ are linearly independent, in which $\vec{n}$ is normal to the line that represents the infinity.
\end{proof}

Due to Propositions \ref{prop-nh-intersection} and \ref{prop-planar-equivalence-NH}, the transversality condition presented in Theorem \ref{teo-a} is a sufficient and necessary condition to assure normal hyperbolicity at infinity in the 2-dimensional case.

\begin{corollary}
A necessary condition for the existence of simple roots of $P_{ \delta}(u,1,0)$ is $c_{0,0,\delta} \neq 0$ or $c_{0,1,\delta} \neq 0$.
\end{corollary}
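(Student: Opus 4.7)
My approach is a contrapositive argument based on Lemma~\ref{lemma-simple-root} together with the explicit form of the coefficients of $P_{\delta}(u,1,0)$. Substituting $y=1$ and $\varepsilon=0$ in the general expression for $P_{\delta}(x,y,\varepsilon)$ given at the start of Section~\ref{sec-planar-gspt}, one obtains
$$P_{\delta}(u,1,0)=\sum_{r=0}^{\delta}c_{0,r,\delta}\,u^{r},$$
a single-variable polynomial in $u$ whose constant term is $c_{0,0,\delta}$ and whose linear coefficient is $c_{0,1,\delta}$. This identification is the only preparatory step, and it relies entirely on the fact that $y=1$ kills the $y$-exponents in the quasi homogeneous decomposition.

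Next I would argue by contrapositive. Assuming that both $c_{0,0,\delta}=0$ and $c_{0,1,\delta}=0$, the polynomial $P_{\delta}(u,1,0)$ vanishes at $u=0$, and its derivative also vanishes there, since
$$\frac{\partial}{\partial u}P_{\delta}(u,1,0)\bigg|_{u=0}=c_{0,1,\delta}=0.$$
By Lemma~\ref{lemma-simple-root}, $u=0$ is then a root of $P_{\delta}(u,1,0)$ which is not simple. Equivalently, one has the factorization $P_{\delta}(u,1,0)=u^{2}\widetilde{R}(u)$ for some polynomial $\widetilde{R}$, so the origin of the chart $U_{2}$ is a root of multiplicity at least two and cannot support a simple zero.

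There is no genuinely hard step here; the whole content of the corollary is the observation that $c_{0,0,\delta}$ and $c_{0,1,\delta}$ are precisely the two lowest-order coefficients of $P_{\delta}(u,1,0)$, so their simultaneous vanishing forces a non-simple zero at $u=0$ via Lemma~\ref{lemma-simple-root}. In that sense the corollary is just an algebraic rephrasing, in terms of the original coefficients $c_{\varepsilon,r,i}$ of $P$, of the standard simplicity criterion applied at the origin of the PL chart at infinity, and it can be read directly off the expansion of $P_{\delta}(u,1,0)$ without any further computation.
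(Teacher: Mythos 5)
Your argument is essentially the paper's: both proofs reduce to reading off from the expansion of $P_{\delta}$ that $c_{0,0,\delta}=P_{\delta}(0,1,0)$ and $c_{0,1,\delta}=\frac{\partial}{\partial u}P_{\delta}(0,1,0)$, and then invoking the simplicity criterion of Lemma~\ref{lemma-simple-root}; the paper runs the implication forwards (one of the two coefficients nonzero forces the origin of $U_{2}$ to be either a regular point or a normally hyperbolic one), while you run the contrapositive. One caveat, which applies equally to the paper's own proof: what you actually establish is that $u=0$ is a non-simple root, not that $P_{\delta}(u,1,0)$ has no simple roots at all, which is what the literal negation of ``existence of simple roots'' would require. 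Read literally the corollary is false --- for instance $P_{\delta}(u,1,0)=u^{2}(1+u)$ has $c_{0,0,\delta}=c_{0,1,\delta}=0$ yet admits the simple root $u=-1$ --- so the statement must be understood as concerning the root at the origin of the chart $U_{2}$ (equivalently, normal hyperbolicity of $C_{0}^{\infty}$ there), which is precisely the point that both your argument and the paper's address.
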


\begin{proof}
From the expression of $P_{i}(x,y,\varepsilon)$, if $c_{0,0,\delta} \neq 0$, then the component $P_{ \delta}$ of $P$ has a monomial of the form $c_{0,0,\delta}y^{\frac{\delta+\omega_{1}}{\omega_{2}}}$. On the other hand, if, $c_{0,1,\delta} \neq 0$, then the component $P_{ \delta}$ of $P$ has a monomial of the form $c_{0,1,\delta}xy^{\frac{\delta}{\omega_{2}}}$. In both cases, by setting $(u,1,0)$, the origin of the chart $U_{2}$ will be either a regular point or a hyperbolic equilibrium point of the the compactification in the slow direction \eqref{eq-sys-compact-U2}.
\end{proof}

Since we have studied the dynamics of the compactification in the slow direction, it is sufficient to study the dynamics near the origin of the compactification at the fast direction \eqref{eq-sys-compact-U1}. Recall that \eqref{eq-sys-compact-U1} is not a slow-fast system.

\begin{proposition}\label{prop-r2-u1}
Let \eqref{eq-polynomial-slow-fast-r2} be a planar polynomial slow-fast system and consider its compactification in the fast direction \eqref{eq-sys-compact-U1}. Then, for $\varepsilon = 0$, the following statements are true
\begin{description}
    \item[(a)] The origin of the chart $U_{1}$ is an equilibrium point of \eqref{eq-sys-compact-U1}.
    \item[(b)] The critical manifold $C_{0}$ intersects the origin of $U_{1}$ if, and only if, $c_{0,\frac{\delta}{\omega_{1}},\delta} = 0$. In this case, the origin is a non-hyperbolic equilibrium point of \eqref{eq-sys-compact-U1}.
    \item[(c)] The origin of $U_{1}$ is an hyperbolic node of \eqref{eq-sys-compact-U1} if, and only if, $c_{0,\frac{\delta}{\omega_{1}},\delta} \neq 0$.
\end{description}
\end{proposition}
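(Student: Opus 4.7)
The plan is to observe that when $\varepsilon = 0$ the system (\ref{eq-sys-compact-U1}) factors as
\begin{equation*}
u' \;=\; -u\, F(u,v), \qquad v' \;=\; -v\, F(u,v),
\end{equation*}
with $F(u,v) := \sum_{i=-1}^{\delta} v^{\delta-i} P_{i}(1,u,0)$. Item (a) is then immediate, since every term of both components carries a factor of $u$ or $v$, so the origin is an equilibrium regardless of the value of $F(0,0)$.

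For item (b), I would express $C_{0} = \{P(x,y,0) = 0\}$ in the coordinates of $U_{1}$ via $x = v^{-\omega_{1}}$, $y = u\, v^{-\omega_{2}}$, and apply the quasi-homogeneity of each $P_{i}$ with $\lambda = 1/v$ to obtain the identity
\begin{equation*}
v^{\delta+\omega_{1}}\, P\bigl(v^{-\omega_{1}},\, u\, v^{-\omega_{2}},\, 0\bigr) \;=\; F(u,v).
\end{equation*}
Consequently, away from $\{v=0\}$, the equation of $C_{0}$ in the chart $U_{1}$ is $F = 0$, and the origin lies on $C_{0}$ if and only if $F(0,0) = 0$. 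Setting $v=0$ in the defining sum keeps only the $i=\delta$ summand, because $\delta - i \geq 0$ with equality forcing $i = \delta$, so $F(0,0) = P_{\delta}(1,0,0)$. From the expansion of $P_{\delta}$, the unique monomial surviving at $(x,y)=(1,0)$ is the one whose $y$-exponent vanishes, and its coefficient is precisely $c_{0,\delta/\omega_{1},\delta}$. This proves the ``if and only if'' part of (b); the non-hyperbolicity assertion will drop out of the Jacobian computation below.

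For item (c), I would differentiate the factored system at the origin. Since $\partial_{u}(-uF) = -F - u F_{u}$, $\partial_{v}(-uF) = -u F_{v}$, and symmetrically for $v'$, evaluating at $(0,0)$ yields the diagonal matrix $-F(0,0)\, I = -c_{0,\delta/\omega_{1},\delta}\, I$. Both eigenvalues coincide and equal $-c_{0,\delta/\omega_{1},\delta}$, so the origin is hyperbolic if and only if this coefficient is non-zero; in that case the linearization is a non-zero scalar multiple of the identity, whence the origin is a (proper, or star) hyperbolic node. When the coefficient vanishes the Jacobian at the origin is the zero matrix, confirming the non-hyperbolicity assertion of (b) and closing both items simultaneously. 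I do not foresee any substantial obstacle: the one delicate step is the quasi-homogeneity manipulation that collapses the compactified $P$ into the single function $F$, after which the rest amounts to reading off $F(0,0)$.
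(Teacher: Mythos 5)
Your proof is correct and follows essentially the same route as the paper's, which merely asserts that these computations are ``straightforward''; your explicit factorization $u'=-uF$, $v'=-vF$ and the resulting Jacobian $-F(0,0)\,I=-P_{\delta}(1,0,0)\,I$ at the origin make the star-node versus non-hyperbolic dichotomy, and hence items (b) and (c), transparent. One small caveat: with the paper's stated expansion $P_{i}(x,y,\varepsilon)=\sum_{r}c_{\varepsilon,r,i}x^{r}y^{\frac{i+\omega_{1}(1-r)}{\omega_{2}}}$, the monomial of $P_{\delta}$ with vanishing $y$-exponent has $r=\delta/\omega_{1}+1$, so the surviving coefficient is literally $c_{0,\delta/\omega_{1}+1,\delta}$ rather than $c_{0,\delta/\omega_{1},\delta}$ — an off-by-one that originates in the proposition's own indexing and does not affect the substance of your argument.
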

\begin{proof}
Recall the expression of $P_{i}(x,y,\varepsilon)$. It is straightforward from equation \eqref{eq-sys-compact-U1} that the origin of $U_{1}$ is an equilibrium point for $\varepsilon = 0$. Therefore, items (b) and (c) aims to understand features of such equilibrium. Moreover, in this chart we must study the polynomial $P$ applied in points of the form $(1,u,0)$.

Observe that in the chart $U_{1}$ the critical manifold $C_{0}$ is the zero set of the polynomial $P_{ \delta}(1,u,0)$, which represents a curve of singularities for $\varepsilon = 0$. Therefore, the critical manifold intersects the origin of $U_{1}$ if, and only if, $c_{0,\frac{\delta}{\omega_{1}},\delta} = 0$. Moreover, the origin will be non hyperbolic for \eqref{eq-sys-compact-U1} if such a point is contained in $C_{0}$. This proves item (b) Finally, assuming that $\varepsilon = 0$, it can be easily checked that $c_{0,\frac{\delta}{\omega_{1}},\delta} \neq 0$ if, and only if, the origin of $U_{1}$ is an hyperbolic node of \eqref{eq-sys-compact-U1}, which proves item (c).
\end{proof}

Now, we are able to state, for the planar case, a \emph{global} version of the Fenichel Theorem, which assures the persistence of invariant manifolds in the \emph{whole} Poincaré--Lyapunov disk. The proof of Theorem \ref{teo-fenichel-global-r2} is given by combining Fenichel Theorem (for the finite part) and Propositions \ref{prop-nh-intersection}, \ref{prop-planar-equivalence-NH} and \ref{prop-r2-u1} (for the infinite part). In its statement, the compactified critical manifold is denoted by $\mathbf{C}_{0}$, which is the union of the finite and infinite parts of $C_{0}$.

\begin{mtheorem}\label{teo-fenichel-global-r2}
Consider the planar polynomial slow-fast system \eqref{eq-polynomial-slow-fast-r2}. Suppose that $\mathcal{NH}(C_{0}) = C_{0}$, $C_{0}$ intersects the infinity of $\mathbb{D}_{\omega}$ transversely, and it does not intersect the origin of $U_{1}$, $V_{1}$. Then there exist $0 < \tilde{\varepsilon} \ll 1$ such that for $\varepsilon < \tilde{\varepsilon}$ the following hold in the whole $\mathbb{D}_{\omega}$:
\begin{description}
    \item[(G1)] There exist a family of smooth manifolds $\mathbf{C}_{\varepsilon}$ such that $\mathbf{C}_{\varepsilon}\rightarrow \mathbf{C}_{0}$ according to Hausdorff distance and $\mathbf{C}_{\varepsilon}$ is locally invariant of \eqref{eq-polynomial-slow-fast-r2}.
    \item[(G2)] If $p_{0}\in\mathbf{C}_{0}$ and $\mathbf{W}^{s}$ is its stable manifold, then there is a family $\mathbf{W}^{s}_{\varepsilon}$ of stable manifolds of $p_{\varepsilon}\in\mathbf{C}_{\varepsilon}$, in which $p_{\varepsilon}\rightarrow p_{0}$. The same conclusion holds if one consider the unstable manifold $\mathbf{W}^{u}$ of $p_{0}\in\mathbf{C}_{0}$.
\end{description}

\end{mtheorem}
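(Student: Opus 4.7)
The plan is to verify that the compactified critical manifold $\mathbf{C}_0 \subset \mathbb{D}_\omega$ is compact and normally hyperbolic everywhere under the three stated hypotheses, and then apply local Fenichel theory chart-by-chart, patching the resulting local slow manifolds into a single family $\mathbf{C}_\varepsilon$ defined on the whole PL-disk.

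First I would check that $\mathbf{C}_0$ is compact. Since $\mathbb{D}_\omega$ is compact, it suffices to describe the closure points of $C_0$ on the boundary $\partial\mathbb{D}_\omega$. In the slow charts $U_2, V_2$, the expression \eqref{eq-sys-compact-U2} shows these closure points are the roots of $P_\delta(u,1,0)$, a finite set. In the fast charts $U_1, V_1$, Proposition \ref{prop-r2-u1}(b) combined with the hypothesis that $C_0$ avoids the origins of $U_1, V_1$ rules out any further closure point. Thus $\mathbf{C}_0$ is obtained from $C_0$ by adjoining finitely many boundary points, and is compact.

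Next I would verify normal hyperbolicity of $\mathbf{C}_0$ pointwise. At finite points this is the hypothesis $\mathcal{NH}(C_0)=C_0$. At each infinite point lying in $U_2$ or $V_2$, the transversality of $C_0$ with $\partial\mathbb{D}_\omega$ is converted into normal hyperbolicity of the corresponding equilibrium of \eqref{eq-sys-compact-U2} by Propositions \ref{prop-nh-intersection} and \ref{prop-planar-equivalence-NH}. The origins of $U_1, V_1$ are hyperbolic nodes of \eqref{eq-sys-compact-U1} by Proposition \ref{prop-r2-u1}(c), but they do not belong to $\mathbf{C}_0$ and so need no further consideration. Hence every point of $\mathbf{C}_0$ is normally hyperbolic for the compactified slow system.

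With these facts in hand I would cover $\mathbf{C}_0$ by finitely many compact coordinate patches, using the classical Fenichel theorem in the interior and the infinite-chart version (analogous to Theorem \ref{teo-fenichel-infty} specialized to $n=2$) in each chart at infinity. Each patch yields a locally invariant normally hyperbolic slow manifold and its associated local stable and unstable manifolds for $\varepsilon$ sufficiently small. These are then patched on overlaps. The Hausdorff convergence $\mathbf{C}_\varepsilon \to \mathbf{C}_0$ follows from the chart-wise convergences, and the conclusions for $\mathbf{W}^s_\varepsilon$ and $\mathbf{W}^u_\varepsilon$ in (G2) are inherited from the stable/unstable parts of the local Fenichel statement.

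The main obstacle is the patching step: local slow manifolds are unique only up to differences of order $\mathcal{O}(e^{-K/\varepsilon})$, so one must select one choice of slow manifold in each chart and reconcile the exponentially small discrepancies on overlaps, keeping track of the uniformity of the Fenichel constants so that a single $\tilde\varepsilon > 0$ works simultaneously across the finitely many patches covering $\mathbb{D}_\omega$.
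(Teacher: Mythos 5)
Your proposal follows essentially the same route as the paper: the paper's proof consists precisely of combining the classical Fenichel Theorem for the finite part with Propositions \ref{prop-nh-intersection}, \ref{prop-planar-equivalence-NH} and \ref{prop-r2-u1} for the points at infinity, exactly as you do. Your additional remarks on compactness of $\mathbf{C}_{0}$ and on reconciling the exponentially close local slow manifolds on chart overlaps are sound and in fact more explicit than the paper's own one-line argument.
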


\begin{example}
Consider the slow fast system
\begin{equation}\label{eq-exe-slow-fast-dim-2}
x' = x^{2} + xy - 1, \ \quad \ y' = \varepsilon Q(x,y,\varepsilon);
\end{equation}
in which $Q(x,y,\varepsilon)$ is a polynomial function of degree equal or lesser than $2$. A suitable PL-compactification for \eqref{eq-exe-slow-fast-dim-2} is made considering the weight vector $\omega = (1,1)$, therefore we apply the classical Poincaré compactification. Observe that $\deg_{\omega}X_{\varepsilon} = \delta = 1$ and $\deg_{\omega} Q = \delta_{2} \leq 1$. In $U_{1}$ and $U_{2}$, the dynamics at infinity are respectively given by
\begin{equation}
u' = u (-1 - u + v^{2}) + \varepsilon v^{2-\delta_{2}}Q(1, u); \ \quad \ v' = v (-1 - u + v^{2});     
\end{equation}
\begin{equation}
u' = u + u^{2} - v^{2} - \varepsilon u v^{2-\delta_{2}} Q(u,1); \ \quad \ v' = -\varepsilon v^{3-\delta_{2}} Q(u,1).    
\end{equation}

Observe that all points in the critical manifold $C_{0} = \{x^{2} + xy = 1\}$ are normally hyperbolic. The origin of the chart $U_{1}$ is a hyperbolic node and $C_{0}$ is normally hyperbolic at infinity (see Proposition \ref{prop-planar-equivalence-NH}). Therefore, as a consequence of Theorem \ref{teo-fenichel-global-r2}, the global dynamics of system \eqref{eq-exe-slow-fast-dim-2} persist for $\varepsilon$ sufficiently small. See Figure \ref{fig-exe-dim-2}.

\begin{figure}[h!]
	\center{\includegraphics[width=0.35\textwidth]{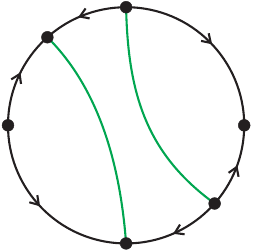}}
	\caption{\footnotesize{Poincaré compactification of system \eqref{eq-exe-slow-fast-dim-2}. The critical manifold $C_{0}$ is highlighted in green and the dots denote equilibria at infinity.}}
	\label{fig-exe-dim-2}
\end{figure}

\end{example}

\section{Non normally hyperbolic points at infinity}\label{sec-non-NH}

In this section is discussed some examples of 3-dimensional polynomial slow fast systems that present non normally hyperbolic singularities at infinity, namely: fold, transcritical and pitchfork singularities (see \cite{KrupaSzmolyan, KrupaSzmolyan2}). Observe that, since the phase-space is 3-dimensional, the slow-fast system at infinity is 2-dimensional.

Firstly, we must recall the normal forms of such singularities in the literature. According to \cite{KrupaSzmolyan, KrupaSzmolyan2}, the non degeneracy conditions that a planar slow-fast system \eqref{eq-def-slowfast-2} must satisfy in order to present (respectively) a fold, transcritical and pitchfork singularity is given (respectively) by the equations \eqref{eq-sing-fold-non-degeneracy-conditions}, \eqref{eq-sing-transcritical-non-degeneracy-conditions} and \eqref{eq-sing-pitchfork-non-degeneracy-conditions}:

\begin{equation}\label{eq-sing-fold-non-degeneracy-conditions}
\begin{split}
    f_{x}(0,0,0) = 0; \ \ f_{xx}(0,0,0) \neq 0; \\  f_{y}(0,0,0) \neq 0 \ \ \text{and} \ \ g(0,0,0)\neq 0.
\end{split}
\end{equation}

\begin{equation}\label{eq-sing-transcritical-non-degeneracy-conditions}
\begin{split}
f(0,0,0) = f_{x}(0,0,0) = f_{y}(0,0,0) = 0; \\
\det\operatorname{Hes}(f) < 0; \ \ f_{xx}(0,0,0)\neq 0\neq g(0,0,0);
\end{split}
\end{equation}
where $\operatorname{Hes}(f)$ denotes the Hessian matrix of $f$.

\begin{equation}\label{eq-sing-pitchfork-non-degeneracy-conditions}
\begin{split}
f(0,0,0) = f_{x}(0,0,0) = f_{xx}(0,0,0) = f_{y}(0,0,0) = 0; \\
f_{xxx}(0,0,0) \neq 0, \ \ f_{xy}(0,0,0) \neq 0, \ \ g(0,0,0)\neq 0.
\end{split}
\end{equation}

Theorem \ref{teo-fenichel-normal-form-fold} gathers the results on normal forms of planar slow-fast systems, based on the non degeneracy conditions above. The notation $\mathcal{O}$ denotes the higher order terms, whereas $\lambda$ denotes a constant that depends on the conditions of non-degeneracy of each singularity (see \cite{KrupaSzmolyan,KrupaSzmolyan2} for details).

\begin{theorem}\label{teo-fenichel-normal-form-fold}
There exists a smooth change of coordinates such that for $(x,y)$ sufficiently small a planar slow-fast system is written as
\begin{description}
    \item[(a)] If system \eqref{eq-def-slowfast-2} satisfies the non-degeneracy conditions \eqref{eq-sing-fold-non-degeneracy-conditions} of a planar generic fold:
    \begin{equation}\label{eq-teo-fenichel-normal-form-fold}
   x' = y + x^{2} + \mathcal{O}(x^{3}, xy, y^{2}, \varepsilon); \ \ \ 
   y' = \varepsilon\Big{(}\pm 1 + \mathcal{O}(x,y,\varepsilon)\Big{)};
\end{equation}
  \item[(b)] If system \eqref{eq-def-slowfast-2} satisfies the non-degeneracy conditions \eqref{eq-sing-transcritical-non-degeneracy-conditions} of a generic transcritical singularity:   \begin{equation}\label{eq-normal-form-transcritical}
    x' = x^{2} - y^{2} + \lambda\varepsilon + \mathcal{O}(x^{3}, x^{2}y, xy^{2},y^{3},\varepsilon x,\varepsilon y,\varepsilon^2); \ \ \ 
    y' = \varepsilon\Big{(}1 + \mathcal{O}(x,y,\varepsilon)\Big{)};
\end{equation}

    \item[(c)] If  system \eqref{eq-def-slowfast-2} satisfies the non-degeneracy conditions \eqref{eq-sing-pitchfork-non-degeneracy-conditions} of a pitchfork singularity: \begin{equation}\label{eq-normal-form-pitchfork}
    x' = x(y - x^{2}) + \lambda\varepsilon + \mathcal{O}(x^{2}y, xy^{2},y^{3},\varepsilon x,\varepsilon y,\varepsilon^2); \ \ \ 
    y' = \varepsilon\Big{(}\pm 1 + \mathcal{O}(x,y,\varepsilon)\Big{)}.
\end{equation}

\end{description}
\end{theorem}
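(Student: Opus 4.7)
The statement compiles three classical normal form results whose proofs can be found in \cite{KrupaSzmolyan, KrupaSzmolyan2}, and my plan is to treat each case by an explicit construction of a smooth near-identity change of coordinates combined with a time rescaling. In every case the strategy is the same: Taylor expand the fast equation $f(x,y,\varepsilon)$ at the origin, identify the principal part prescribed by the non-degeneracy conditions, use a polynomial transformation to remove all unwanted low-order monomials, and finally apply linear rescalings of $x$, $y$ and time $t$ to normalize the leading coefficients to the values appearing in \eqref{eq-teo-fenichel-normal-form-fold}--\eqref{eq-normal-form-pitchfork}. Throughout, the $y$-equation $y' = \varepsilon g(x,y,\varepsilon)$ with $g(0,0,0)\neq 0$ is normalized by dividing by $|g(0,0,0)|$ (equivalently, a time rescaling by a positive factor near the origin), so that $y' = \varepsilon(\pm 1 + \mathcal{O}(x,y,\varepsilon))$ or $\varepsilon(1+\mathcal{O})$ as required.

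For case (a), assuming implicitly $f(0,0,0)=0$ (so the origin is on the critical set), the conditions \eqref{eq-sing-fold-non-degeneracy-conditions} give the expansion $f(x,y,0) = f_y(0)y + \tfrac{1}{2} f_{xx}(0)x^2 + \mathcal{O}(x^3,xy,y^2)$. A linear rescaling $x \mapsto \alpha x$, $y \mapsto \beta y$ with $\alpha^2 = 2/f_{xx}(0)$ and $\beta = \alpha^2 f_{xx}(0)/(2 f_y(0))$ (choosing appropriate signs) turns the leading part into $y + x^2$, absorbing the remaining terms into $\mathcal{O}(x^3, xy, y^2, \varepsilon)$, which is exactly \eqref{eq-teo-fenichel-normal-form-fold}.

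For case (b), the conditions \eqref{eq-sing-transcritical-non-degeneracy-conditions} force the $2$-jet of $f(x,y,0)$ to be a non-degenerate indefinite quadratic form $Q(x,y)$ (since $\det\operatorname{Hes}(f) < 0$). Morse's lemma, or a direct linear change of coordinates diagonalizing $Q$, puts this $2$-jet in the form $x^2 - y^2$; the $\varepsilon$-dependence introduces the constant $\lambda\varepsilon$ through the normalization. Case (c) is analogous: conditions \eqref{eq-sing-pitchfork-non-degeneracy-conditions} give $f(x,y,0)$ a vanishing $2$-jet but a non-degenerate part $\tfrac{1}{6} f_{xxx}(0) x^3 + f_{xy}(0) xy + \mathcal{O}$, which after rescaling factors as $x(y - x^2)$ up to higher-order terms; the $\lambda\varepsilon$ is introduced in the same way as in case (b).

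The main obstacle is verifying that the changes of coordinates remain smooth (in fact analytic in $\varepsilon$) and that after each step the residual error terms fit inside the specified $\mathcal{O}$ classes without destroying the structure of the principal part; this is a standard but somewhat technical bookkeeping argument, carried out carefully in the Krupa--Szmolyan references, and rather than reproduce it here I would cite those works for the detailed justification.
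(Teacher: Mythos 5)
The paper gives no proof of this theorem: it is presented as a compilation of the normal-form results of Krupa and Szmolyan, with the reader referred to \cite{KrupaSzmolyan, KrupaSzmolyan2}, which is essentially what your proposal does after sketching the standard Taylor-expansion, near-identity transformation and rescaling argument. Your sketch is consistent with those references; the one point worth flagging is that in case (b) the reduction of the indefinite quadratic form to $x^{2}-y^{2}$ must be carried out by a change of the fast variable only (completing the square in $x$, which is possible since $f_{xx}(0,0,0)\neq 0$) rather than by an arbitrary linear change or Morse's lemma, so as to preserve the slow--fast (fiber) structure of the system.
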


The main goal of this section is to study conditions that a 3-dimensional polynomial slow-fast system of the form
\begin{equation}\label{eq-3d-slow-fast}
x' = P(x,y,z,\varepsilon), \quad y' = \varepsilon Q(x,y,z,\varepsilon), \quad z' = \varepsilon R(x,y,z,\varepsilon)
\end{equation}
must satisfy in order to present a fold, transcritical or pitchfork singularity at infinity in the Poincaré-Lyapunov compactification with weight $\omega = (\omega_{1},\omega_{2},\omega_{3})$. Without loss of generality, in what follows is studied conditions in order to assure that the origin of the chart $U_{2}$ is one of the non normally hyperbolic points given by Theorem \ref{teo-fenichel-normal-form-fold}. Moreover, if $X_{\varepsilon}$ is the vector field associated to \eqref{eq-3d-slow-fast}, then $\deg_{\omega}X_{\varepsilon} = \delta$.

\begin{mtheorem}\label{teo-normal-forms}
Consider the 3-dimensional slow fast system \eqref{eq-3d-slow-fast} and its Poincaré-Lyapunov compactification $X^{\infty}_{ \varepsilon}$ with weight $\omega = (\omega_{1},\omega_{2},\omega_{3})$. If $\deg_{\omega}X_{\varepsilon} = \delta$, then the following hold for every positive integer $k_{1},k_{2}$:

\begin{description}
    \item[(a)] If $P_{ \delta}(x,y,z,\varepsilon) = x^{2}y^{k_{1}} - y^{k_{2}}z$, then the critical manifold of the compactified vector field $X^{\infty}_{ \varepsilon}$ has a fold singularity at the origin of the chart $U_{2}$ if, and only if, $k_{1}\omega_{2} = \delta - \omega_{1}$ and $k_{2}\omega_{2} = \delta + \omega_{1} -
    \omega_{3}$.

    \item[(b)] If $P_{ \delta}(x,y,z,\varepsilon) = x^{2}y^{k_{1}} - y^{k_{2}}z^{2}$, then the critical manifold of the compactified vector field $X^{\infty}_{ \varepsilon}$ has a transcritical singularity at the origin of the chart $U_{2}$ if, and only if, $k_{1}\omega_{2} = \delta - \omega_{1}$ and $k_{2}\omega_{2} = \delta + \omega_{1} -
    2\omega_{3}$.
    \item[(c)] If $P_{ \delta}(x,y,z,\varepsilon) = xy^{k_{1}}z - x^{3}y^{k_{2}}$, then the critical manifold of the compactified vector field $X^{\infty}_{ \varepsilon}$ has a pitchfork singularity at the origin of the chart $U_{2}$ if, and only if, $k_{1}\omega_{2} = \delta - \omega_{3}$ and $k_{2}\omega_{2} = \delta - 2\omega_{1}$.
\end{description}
\end{mtheorem}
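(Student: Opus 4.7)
The plan is to apply the PL-compactification formula \eqref{eq-slow-fast-Ul-n} with $n=3$ and $l=2$ to transfer the analysis to the chart $U_{2}$, then read off the fast equation restricted to the infinity $\{v_{3}=0\}$ at $\varepsilon=0$, and finally match it against the non-degeneracy conditions stated in Theorem \ref{teo-fenichel-normal-form-fold}. Inspecting \eqref{eq-slow-fast-Ul-n} one sees that the first component of $X^{\infty}_{\varepsilon}$ on $\{v_{3}=0,\ \varepsilon=0\}$ reduces to $u' = P_{\delta}(u,1,v_{2},0)$, so the whole geometric content of each of the three cases is determined by evaluating $P_{\delta}$ on the line $y=1$, and the only nontrivial algebraic step is to decide when the listed monomials are genuinely part of the quasi-homogeneous top-degree component.

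First I would spell out what quasi-homogeneity of $P_{\delta}$ of type $\omega$ and degree $\delta+\omega_{1}$ means monomial-by-monomial: any $x^{a}y^{b}z^{c}$ appearing in $P_{\delta}$ must satisfy $a\omega_{1}+b\omega_{2}+c\omega_{3}=\delta+\omega_{1}$. Applying this to the two monomials in each case yields exactly the stated conditions on $k_{1}$ and $k_{2}$: for case (a) the monomials $x^{2}y^{k_{1}}$ and $y^{k_{2}}z$ force $k_{1}\omega_{2}=\delta-\omega_{1}$ and $k_{2}\omega_{2}=\delta+\omega_{1}-\omega_{3}$; for (b) the monomial $y^{k_{2}}z^{2}$ gives $k_{2}\omega_{2}=\delta+\omega_{1}-2\omega_{3}$ while $x^{2}y^{k_{1}}$ again yields $k_{1}\omega_{2}=\delta-\omega_{1}$; and for (c) the monomials $xy^{k_{1}}z$ and $x^{3}y^{k_{2}}$ give $k_{1}\omega_{2}=\delta-\omega_{3}$ and $k_{2}\omega_{2}=\delta-2\omega_{1}$. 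This supplies the ``only if'' direction of each biconditional, since without these relations the displayed expression for $P_{\delta}$ fails to be quasi-homogeneous of the required degree and therefore cannot occur as the top-degree component.

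For the converse direction I would assume that the relations on $k_{1},k_{2}$ hold, so that the displayed $P_{\delta}$ is the genuine quasi-homogeneous component, and then substitute $(x,y,z)=(u,1,v_{2})$ and compare the resulting fast equation with Theorem \ref{teo-fenichel-normal-form-fold}. Case (a) gives $P_{\delta}(u,1,v_{2},0)=u^{2}-v_{2}$, so the fast equation is $u'=u^{2}-v_{2}$; checking $f_{u}(0)=0$, $f_{uu}(0)=2\neq 0$ and $f_{v_{2}}(0)=-1\neq 0$ verifies \eqref{eq-sing-fold-non-degeneracy-conditions}. Case (b) yields $P_{\delta}=u^{2}-v_{2}^{2}$; one verifies $f(0)=f_{u}(0)=f_{v_{2}}(0)=0$, $f_{uu}(0)=2\neq 0$ and $\det\operatorname{Hes}(f)=-4<0$, matching \eqref{eq-sing-transcritical-non-degeneracy-conditions}. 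Case (c) gives $P_{\delta}=u(v_{2}-u^{2})$, where all partial derivatives of total order $\leq 2$ vanish at the origin while $f_{uuu}(0)=-6\neq 0$ and $f_{uv_{2}}(0)=1\neq 0$, which is exactly \eqref{eq-sing-pitchfork-non-degeneracy-conditions}.

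The main subtlety I anticipate is the condition $g(0,0,0)\neq 0$ appearing in each of \eqref{eq-sing-fold-non-degeneracy-conditions}--\eqref{eq-sing-pitchfork-non-degeneracy-conditions}, which in the chart $U_{2}$ translates into a non-vanishing condition on the slow component (essentially $R_{\delta}(0,1,0,0)\neq 0$) and is not addressed by any hypothesis on $P_{\delta}$ alone. Since Theorem \ref{teo-normal-forms} is phrased in terms of the critical manifold only, I would either fold this into a standing genericity assumption on $Q$ and $R$ or make it an explicit supplementary hypothesis, and then emphasize that all the ``fast'' non-degeneracy conditions used to detect fold/transcritical/pitchfork points are purely algebraic consequences of the quasi-homogeneity bookkeeping above. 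Modulo this point, the proof is a direct computation and I do not foresee a deeper obstacle.
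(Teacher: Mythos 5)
Your proposal is correct and follows essentially the same route as the paper: write the compactified system in the chart $U_{2}$, restrict to $\{v_{n}=0,\ \varepsilon=0\}$ so that the fast equation reduces to $u'=P_{\delta}(u,1,v,0)$, and match the result ($u^{2}-v$, $u^{2}-v^{2}$, $u(v-u^{2})$) against the Krupa--Szmolyan non-degeneracy conditions, with the relations on $k_{1},k_{2}$ arising from the same weight bookkeeping that the paper phrases as cancellation of the powers of $w$. Your observation that the condition $g(0,0,0)\neq 0$ calls for a genericity hypothesis on the slow components $Q,R$ that is not supplied by the hypotheses on $P_{\delta}$ alone is a legitimate point which the paper's proof silently omits.
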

\begin{proof}
The proof is given by straightforward computations. We present the calculations of the proof of item (a), since the calculations of the other items are completely analogous.

Consider the 3-dimensional polynomial slow-fast system \eqref{eq-3d-slow-fast}. We recall from item (c) of Proposition \ref{prop-general-dim} that, the vector field in the chart $U_{2}$ is a slow-fast system if, and only if, $\deg_{\omega}X_{\varepsilon} = \deg_{\omega}P = \deg_{\omega}Q$ or $\deg_{\omega}X_{\varepsilon} = \deg_{\omega}P = \deg_{\omega}R$.

Given the weight vector $\omega = (\omega_{1},\omega_{2},\omega_{3})$, the expression of the compactified slow-fast system in the chart $U_{2}$ is
\begin{equation}\label{eq-teo-non-nh-fold}
\left\{
  \begin{array}{rcl}
   u' & = & w^{\omega_{1}}P - \varepsilon u \frac{\omega_{1}}{\omega_{2}}w^{\omega_{2}}Q,  \\
   v' & = & \varepsilon\big{(}w^{\omega_{3}}R -  v \frac{\omega_{3}}{\omega_{2}}w^{\omega_{2}}Q\big{)},  \\
   w' & = & -\varepsilon\frac{w^{\omega_{2}+1}}{\omega_{2}}Q,
  \end{array}
\right.
\end{equation}
in which $P,Q$ and $R$ are applied in $(\frac{u}{w^{\omega_{1}}},\frac{1}{w^{\omega_{2}}},\frac{v}{w^{\omega_{3}}},\varepsilon)$. 

Given that the highest quasihomogeneous degree component of $P$ is given by $P_{ \delta}(x,y,z,\varepsilon) = x^{2}y^{k_{1}} - y^{k_{2}}z$, then, after multiplying the vector field by $w^{\delta}$, system \eqref{eq-teo-non-nh-fold} can be rewritten as 
\begin{equation}\label{eq-teo-non-nh-fold-2}
\left\{
  \begin{array}{rcl}
   u' & = & \big{(}\displaystyle\frac{u^{2}w^{\delta}}{w^{\omega_{1}+k_{1}\omega_{2}}} - \frac{vw^{\delta}}{w^{\omega_{3}+k_{2}\omega_{2}-\omega_{1}}}\big{)} + 
   \displaystyle\sum_{d=-1}^{\delta-1}w^{\delta-d}P_{ d} -  \varepsilon u \frac{\omega_{1}}{\omega_{2}}\displaystyle\sum_{d=-1}^{\delta}w^{\delta-d}Q_{ d},  \\
   v' & = & \varepsilon\displaystyle\sum_{d=-1}^{\delta}w^{\delta-d}\big{(}R_{ d} -  v \frac{\omega_{3}}{\omega_{2}}Q_{ d}\big{)},  \\
   w' & = & -\displaystyle\frac{\varepsilon}{\omega_{2}}\sum_{d=-1}^{\delta}w^{\delta+1-d}Q_{ d},
  \end{array}
\right.
\end{equation}
in which the polynomial functions $P,Q$ and $R$ are applied in $(u,1,v,\varepsilon)$. Therefore, setting $w = 0$ and $\varepsilon = 0$ in equation \eqref{eq-teo-non-nh-fold-2}, it follows that the origin of the chart $U_{2}$ is a generic fold singularity if, and only if, $k_{1}\omega_{2} = \delta - \omega_{1}$ and $k_{2}\omega_{2} = \delta + \omega_{1} - \omega_{3}$.\end{proof}

Theorem \ref{teo-normal-forms} gives conditions on the highest quasi homogeneous degree of the polynomial $P$ and on the weights $\omega = (\omega_{1},\omega_{2},\omega_{3})$ in order to assure that the origin of the chart $U_{2}$ is one of the non normally hyperbolic singularities given by Theorem \ref{teo-fenichel-normal-form-fold}. However, it is important to remark that, depending on the weight vector $\omega$, it is not possible to generate such singularities. This fact will be clear in the next examples.

\begin{example}\label{exe-compact-non-NH-fold}
Under the hypothesis of Theorem \ref{teo-normal-forms}, suppose that $\omega_{1} = 1$ and $\omega_{2} = \omega_{3} = 2$. Then for any positive integers $k_{1} = k_{2} = \frac{\delta - 1}{2}$, the origin of the chart $U_{2}$ will be a fold singularity of $X^{\infty}_{ \varepsilon}$. However, if $\omega_{1} = 3, \omega_{2} = 2$ and $\omega_{1} = 1$, then it does not exist positive integers $k_{1}$ and $k_{2}$ satisfying conditions (a) of Theorem \ref{teo-normal-forms}. See Figure \ref{fig-compact-non-NH}.
\end{example}

\begin{example}\label{exe-compact-non-NH-transcritical}
Suppose that $\omega_{1} = \omega_{3}$ and $\omega_{2} = 1$. For any positive integers $k_{1} = k_{2} = \delta - \omega_{1}$, the origin of the chart $U_{2}$ will be a transcritical singularity of $X^{\infty}_{ \varepsilon}$. See Figure \ref{fig-compact-non-NH}.    
\end{example}

\begin{example}\label{exe-compact-non-NH-pitchfork}
Suppose that $\omega_{1} = 2$ and $\omega_{2} = \omega_{3} = 1$. For any positive integers $k_{1} = \delta - 1$ and $k_{2} = \delta - 4$, the origin of the chart $U_{2}$ will be a pitchfork singularity of $X^{\infty}_{ \varepsilon}$. Nevertheless, if $\omega_{1} = 3, \omega_{2} = 2$ and $\omega_{1} = 1$, then it does not exist positive integers $k_{1}$ and $k_{2}$ satisfying conditions (c) of Theorem \ref{teo-normal-forms}. See Figure \ref{fig-compact-non-NH}.    
\end{example}

\begin{figure}[h!]
	\center{\includegraphics[width=0.3\textwidth]{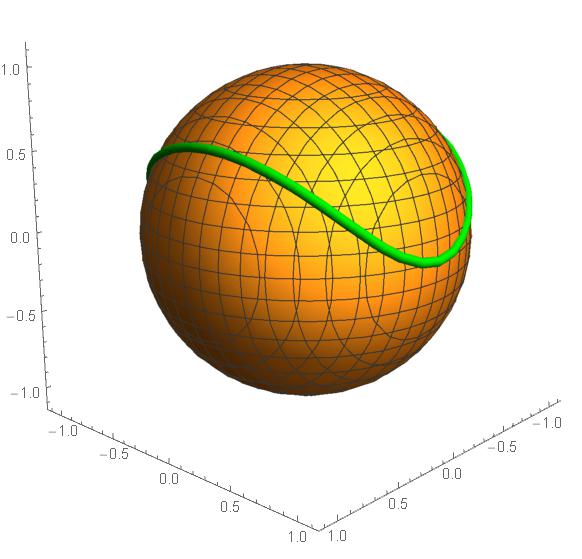}\includegraphics[width=0.3\textwidth]{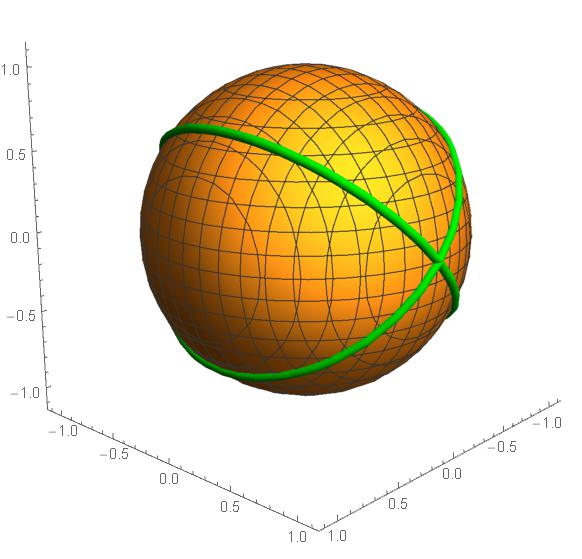}\includegraphics[width=0.3\textwidth]{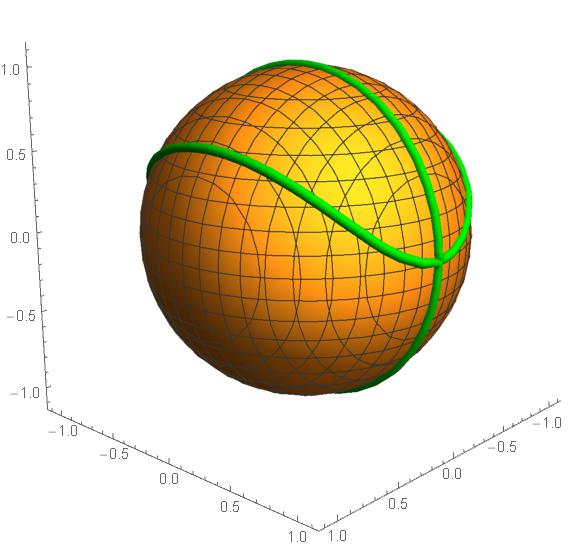}}
	\caption{\footnotesize{Generic non normally hyperbolic singularities at infinity. From the left to the right: fold (Example \ref{exe-compact-non-NH-fold}), transcritical (Example \ref{exe-compact-non-NH-transcritical}) and pitchfork (Example \ref{exe-compact-non-NH-pitchfork}). The critical manifold is highlighted in green.}}
	\label{fig-compact-non-NH}
\end{figure}

\section{Acknowledgements}

This article was possible thanks to the scholarship granted from the Brazilian Federal Agency for Support and Evaluation of Graduate Education (CAPES), in the scope of the Program CAPES-Print, process number 88887.310463/2018-00, International Cooperation Project number 88881.310741/2018-01.

Otavio Perez is supported by Sao Paulo Research Foundation (FAPESP) grant 2021/10198-9. Paulo R. Silva is partially supported by Sao Paulo Research Foundation (FAPESP) grant 2019/10269-3, and CNPq grant 302154/2022-1.

\end{document}